\documentclass[a4paper,twoside,12pt,leqno,
]{article}
\usepackage{amsmath,amsthm,amssymb,enumerate}
\usepackage{graphicx,  float, afterpage,array,multicol,eucal, url,color}

\swapnumbers
\theoremstyle{plain}
\newtheorem{theorem}{Theorem}[section]
\newtheorem{lemma}[theorem]{Lemma}

\newtheorem{AAP}[theorem]{Theorem\normalfont~\cite[Theorem~4.14]{AAP}}
\newtheorem{AAP2}[theorem]{The Abrams-\'Anh-Pardo
Theorem\normalfont~\cite{Pardo}}

\theoremstyle{definition}

\newtheorem{definitions}[theorem]{Definitions}
\newtheorem{example}[theorem]{Example}

\newtheorem{notation}[theorem]{Notation}

\newtheorem{remarks}[theorem]{Remarks}
\newtheorem{conclusions}[theorem]{Conclusions}

\newtheorem{background}[theorem]{Background}


\newcommand{\gp}[2]{\gen{{#1}\mid #2}}
\newcommand{\abs}[1]{\left\lvert#1\right\rvert} 
\newcommand{\gen}[1]{\langle\mkern3mu#1\mkern3mu\rangle}

\newcommand{\Mat}[1]{\operatorname{M}_{\mkern1mu#1\mkern-2mu}} 
\newcommand{\PU}[1]{\operatorname{PU}_{\mkern1mu#1\mkern-2mu}}

\DeclareMathOperator{\transp}{transp}

\DeclareMathOperator{\Fix}{Fix}
\DeclareMathOperator{\Germs}{Germs}
\DeclareMathOperator{\rep}{rep}
\def \L {\mathbb{L}}

\def \integers {\mathbb{Z}}

\def\iso{\cong}
\def\d1{\discretionary{-}{}{-}}
\def\coloneq{\mathrel{\mathop\mathchar"303A}\mkern-1.2mu=}

\newcommand\mapsfrom{\mathrel{\reflectbox{\ensuremath{\mapsto}}}}

\renewcommand{\phi}{\varphi}
\renewcommand{\epsilon}{\varepsilon}
\renewcommand{\le}{\leqslant}
\renewcommand{\ge}{\geqslant}
\renewcommand{\mod}{\text{mod }}




\exhyphenpenalty=10000

\tolerance = 500 \emergencystretch = 1pt

\addtolength{\topmargin}{-.9cm}
\addtolength{\oddsidemargin}{-.6cm}
\addtolength{\evensidemargin}{-2cm}
\setlength\textwidth{6.5in}
\setlength\textheight{8in}

\overfullrule=1pt

\begin{document}

\pagestyle{myheadings}
\markboth{Isomorphisms of  Brin-Higman-Thompson groups}
{Warren Dicks and Conchita Mart\'{\i}nez-P\'erez}

\title{Isomorphisms of  Brin-Higman-Thompson groups}

\author{
Warren Dicks\footnote{Partially supported
    by Spain's Ministerio de Ciencia e Innovaci\'on
through Project  MTM2008-01550.},\,\,
  Conchita Mart\'{\i}nez-P\'erez\footnote{Partially
supported by the Gobierno de Arag\'on, the
European Regional Development Fund and
Spain's Ministerio de Ciencia e Innovaci\'on
through Project  MTM2010-19938-C03-03.}}

\date{\small\today}

\maketitle

\noindent\textbf{Abstract.} Let  $m,\,m',\,r,\, r',t,\,t'$ be
positive integers with $r,\,r' \ge 2$.
Let $\L_r$ denote the ring that is universal with an
invertible $1{\times}r$ matrix.    Let $\Mat{m}(\,\L_r^{\otimes t})$ denote the
ring of $m \times m$ matrices over the tensor product of $t$ copies of $\L_r$.
In a natural way, $\Mat{m}(\,\L_r^{\otimes t})$
is a partially ordered ring with involution.  Let
$\PU{m}(\,\L_r^{\otimes t})$ denote the
group of positive unitary elements.  We  show that
$\PU{m}(\mkern2mu\L_r^{\otimes t})$ is isomorphic to the
Brin-Higman-Thompson group~$t V_{r,m}$;
the case $t{\,=\,}1$ was found by Pardo, that is,
$\PU{m}(\,\L_r)$ is isomorphic to the  Higman-Thompson group~$V_{r,m}$.

We survey arguments of Abrams, \'Anh, Bleak, Brin, Higman, Lanoue,
Pardo, and  Thompson
that prove that $t' V_{r',m'}\iso t V_{r,m} $ if and only if
$r'{\,=\,}r$, $t'{\,=\,}t$ and $ \gcd(m',r'{-}1) = \gcd(m,r{-}1)$
\,\,(if and only if
$\Mat{m'}(\,\L_{r'}^{\otimes t'})$ and $\Mat{m}(\L_r^{\otimes t})$
are isomorphic as partially ordered rings
with involution).

\medskip

{\footnotesize
\noindent \emph{2010 Mathematics Subject Classification.} Primary:  16S10;
Secondary: 20E32, 20B27, 16S50, 20B22.

\noindent \emph{Key words.}  Brin-Higman-Thompson group. Isomorphism.
Leavitt ring.
Positive unitary group.}

\bigskip

\section{Introduction}\label{sec:1}

The notation we use will be explained in the next section.

Throughout, fix $r,\,r' \in [2{\uparrow}\infty[\,$, $m, \, m', \, t,\,t' \in
[1{\uparrow}\infty[\,$, and fix symbols  $x$ and $y$, and let
\begin{equation*}
\L_r \coloneq \integers \gp{x_{[1{\uparrow}r]},y_{[1{\uparrow}r]}} {
x_{[\mkern-3mu[1{\uparrow}r]\mkern-3mu]}^{\transp}\cdot
    y_{[\mkern-3mu[1{\uparrow}r]\mkern-3mu]}   = \mathbf{I}_r \text{ and
}\,  y_{[\mkern-3mu[1{\uparrow}r]\mkern-3mu]}
\cdot x_{[\mkern-3mu[1{\uparrow}r]\mkern-3mu]}^{\transp} =
    1}.
\end{equation*}
Thus, for example, $\L_2 = \integers \gp{x_{1},x_{2},y_1,y_2} {
\scriptstyle\begin{pmatrix}
x_1y_1 &x_1y_2\\
x_2y_1 & x_2y_2
\end{pmatrix}=\begin{pmatrix}
1 &0\\
0 & 1
\end{pmatrix}, \textstyle y_1x_1 + y_2x_2 = 1}.$
We use the symbol $\L$  in recognition of  Leavitt's pioneer work on
these rings in~\cite{Leavitt0},~\cite{Leavitt2}.
We let
$\L_r^{\otimes t} \coloneq \L_r \otimes_\integers \L_r \otimes_\integers
\cdots\otimes_\integers \L_r$,
the ring obtained by forming the tensor product over $\integers$ of
$t$ copies of $\L_r$. We shall be interested in the $m{\times}m $ matrix ring
    $\Mat{m}(\,\L_r^{\otimes t})$.
The mnemonic is that $r$ is for `ring',  $t$ is for `tensor', and
$m$ is for `matrix'.
In a natural way, $\Mat{m}(\,\L_r^{\otimes t})$ is
    a partially ordered ring with involution.  We then let
$\PU{m}(\,\L_r^{\otimes t})$
denote the subgroup of positive unitary elements in the group of units of
$\Mat{m}(\,\L_r^{\otimes t})$.

Abrams, \'Anh and Pardo~\cite{AAP},~\cite{Pardo} found that if
    $\gcd(m',r{-}1) = \gcd(m,r{-}1)$, then
$\Mat{m'}(\,\L_r)$ and $\Mat{m}(\,\L_r)$ are isomorphic  as partially
ordered rings with
involution;
we shall observe that it then
follows easily that $\Mat{m'}(\,\L_r^{\otimes t})$ and
$\Mat{m}(\,\L_r^{\otimes t})$
are isomorphic  as partially ordered rings with involution, and that the groups
$\PU{m'}(\,\L_r^{\otimes t}) $ and
$\PU{m}(\,\L_r^{\otimes t})$
are isomorphic.
We shall give self-contained proofs  of all these isomorphisms.

Pardo~\cite{Pardo} discovered a connection between these rings and
certain  famous groups.
In~\cite{Higman}, Higman constructed a group $V_{r,m}$ with the properties that
   the abelianization of $V_{r,m}$  has order
$\gcd(2,r{-}1)$ and the
    derived group  of $V_{r,m}$  is a  finitely presentable,
infinite, simple group; see also~\cite{Scott}.  The group  $V_{2,1}$
is  Thompson's  group  $V.$
In \cite{Brown}, Brown showed that $V_{r,m}$ is of
type~$\operatorname{FP}_\infty$.
Pardo~\cite{Pardo} found that
$V_{r,m} \iso \PU{m}(\,\L_r)$; hence, if $\gcd(m',r{-}1) =
\gcd(m,r{-}1)$, then the above
    isomorphism   $\PU{m'}(\,\L_r ) \iso \PU{m}(\,\L_r )$
gives the converse of
    Higman's result that
$V_{r',m'}  \iso V_{r,m} $ only if $r' = r$ and $\gcd(m', r'{-}1) =
\gcd(m, r{-}1)$;
see~\cite[Theorem~6.4]{Higman}.

In~\cite[Section~4.2]{Brin}, Brin constructed a group
$tV_{r,m}$ which can be considered as a
$t$-dimensional analogue
of the Higman-Thompson group $V_{r,m} \,(=1V_{r,m})$.  In~\cite{Brin},
he proved that $2V_{2,1}$ is simple
   and  that $2V_{2,1} \not \iso V_{r,m}$ and other results.
In~\cite{Brin2}, he proved that $t V_{2,1}$ is simple.
In~\cite{HM}, Hennig and Matucci gave a finite presentation  of $t V_{2,1}$.
In~\cite{BL}, Bleak and Lanoue showed that $t'V_{2,1} \iso tV_{2,1}$
if and only if $t' = t$.
In~\cite{KochMPNuc}, a description of
$tV_{r,m}$ along the lines of Higman's
construction~\cite{Higman},~\cite{Scott},
was given, and it was used
to show that $2V_{2,1}$ and $3V_{2,1}$ are of type $\operatorname{FP}_\infty$.

The main purpose of this article is to show that
$tV_{r,m}\iso \PU{m}(\,\L_r^{\otimes t})$.
   Straightforward adaptations of known arguments
then show that the following are equivalent.\vspace{-2mm}
\begin{enumerate}[(a)]
\setlength\itemsep{-.2pt}
\item $r'=r$, $t'=t$ and $ \gcd(m',r'{-}1)=\gcd(m,r{-}1) $.
\item
$\Mat{m'}(\,\L_{r'}^{\otimes t'})$ and $\Mat{m}(\L_r^{\otimes t})$
are isomorphic as partially ordered rings
with involution.
\item  $ t' V_{r',m'} \iso t V_{r,m}$.
\end{enumerate}\vspace{-2mm}
Thus, with $r$ and $t$ fixed and $m$ varying,
the set of isomorphism classes of the
groups $t V_{r,m}$  is in bijective
correspondence with the set of  positive divisors of  $r{-}1$.

It may be of interest to ring theorists that there is a fourth equivalent statement:

\smallskip

\noindent $(\text{b}')$ $\Mat{m'}(\,\L_{r'}^{\otimes t'})$ and $\Mat{m}(\L_r^{\otimes t})$
are isomorphic as  rings.

\smallskip

\noindent Thus, with $r$ and $t$ fixed and $m$ varying,
the set of isomorphism classes of the
rings $\Mat{m}(\L_r^{\otimes t})$ is in bijective
correspondence with the set of  positive divisors of  $r{-}1$.

The structure of the article is as follows.
In the first part, we work exclusively with $\Mat{m}(\,\L_r^{\otimes t})$.

In Section~\ref{sec:2}, we summarize the notation that we shall be
using, and endow
$\Mat{m}(\,\L_r^{\otimes t})$ with the structure of a partially
ordered ring with
involution.

In Section~\ref{sec:5}, we give a  streamlined proof of  the crucial
Abrams-\'Anh-Pardo
result~\cite{AAP}  that if
$m > r \ge 3$ and $\gcd(m,r{-}1)=1$, then $\Mat{m}(\,\L_r)$ and $ \L_r$
are isomorphic
as partially ordered rings with involution.

In Section~\ref{sec:7},  following Pardo~\cite{Pardo}, we show that if
$\gcd(m',r{-}1) = \gcd(m,r{-}1)$ then
$\Mat{m'}(\,\L_r^{\otimes t}) $ and $ \Mat{m}(\,\L_r^{\otimes t})$
are isomorphic as partially ordered rings with involution, and, hence,
$\PU{m'}(\,\L_r^{\otimes t}) \iso \PU{m}(\,\L_r^{\otimes t})$.

In the second part of the article, we concentrate on $tV_{r,m}$.

In Section~\ref{sec:3},  we  prove our main result that
the Brin-Higman-Thompson group
$tV_{r,m}$ is isomorphic to $\PU{m}(\,\L_r^{\otimes t})$;
the case $t{\,=\,}1$ was found by Pardo~\cite{Pardo}, that is,
the  Higman-Thompson group~$V_{r,m}$ is isomorphic to
$\PU{m}(\,\L_r)$.
It then follows that if $\gcd(m',r{-}1) = \gcd(m,r{-}1)$, then
$tV_{r,m'} \iso tV_{r,m}$.

In Section~\ref{sec:hig}, we find that arguments of Higman    show that
if  $ t' V_{r',m'}\iso t V_{r,m}$, then
$r'=r$ and $\gcd(m',r'{-}1)=\gcd(m,r{-}1)$.

In Section~\ref{sec:BBL}, we find that arguments of Bleak, Brin,
Lanoue and Rubin
   show that if \mbox{$t' V_{r',m'}\iso t V_{r,m} $}, then
$t'{\,=\,}t$.

In Section~\ref{sec:summ}, we  summarize much
   of the foregoing by recording  the above   equivalence \mbox{(a) $
\Leftrightarrow$ (b) $ \Leftrightarrow$ (c)}.
We conclude with a sketch of unpublished results of Ara, Bell and Bergman
that show that $t$, $r$ and
$\gcd(m,r{-}1)$ are invariants of the isomorphism class of
$\Mat{m}(\,\L_{r}^{\otimes t})$
\textit{as ring}, and thus the foregoing equivalent conditions are
further equivalent to (b$'$).

\section{Notation}\label{sec:2}

We will find it useful to  have a vocabulary for intervals in $\integers$.

\begin{notation} Let $i$, $j \in \integers$.
We define the vector $$[\mkern-3mu[i{\uparrow}j]\mkern-3mu]\coloneq
    \begin{cases}
(i,i+1,\ldots, j-1, j) \in \integers^{j-i+1}   &\text{if $i \le j$,}\\
() \in \integers^0 &\text{if $i > j$.}
\end{cases}
$$
The underlying subset of $\integers$ will be denoted $[i{\uparrow}j]$.
Similar notation applies for $[i{\uparrow}\infty[$\,.

Let $v_k$ be an integer-indexed symbol.  We define the vector
$$v_{[\mkern-3mu[i{\uparrow}j]\mkern-3mu]}\coloneq  \begin{cases}
(v_i,v_{i+1},   \cdots, v_{j-1},   v_j)  &\text{if $i \le j$,}\\
()  &\text{if $i > j$.}
\end{cases}$$
The underlying set will be denoted  $v_{[ i{\uparrow}j] }$.

When $v_{k,k'}$ is a doubly indexed symbol, we write $$v_{[
i{\uparrow}j] \times [
i'{\uparrow}j']} \coloneq
\{v_{k,k'} \mid k \in [i{\uparrow}j], k' \in [i'{\uparrow}j']\}.$$

Let $R$ be a ring with a unit.

For any
subset $Z$ of $R$,
we let $ \langle \langle Z \rangle \rangle$ denote the multiplicative
submonoid of $R$ generated by~$Z$.

Suppose that $m$, $n \in [1{\uparrow}\infty[\,$.

We let $\null^{m}\mkern-3muR^{\mkern2mu n}$ denote the set of $m{\times}n$
matrices over $R$ and we write $\Mat{m}(R) \coloneq
\null^{m}\mkern-3muR^{\mkern2mu m}$.

For $i \in [1{\uparrow}m]$ and $j \in [1{\uparrow}n]$,
we let $e_{i,j} \in \null^{m}\integers^{n}$ denote the
$m {\,\times\,} n$ matrix whose $(i,j)$ coordinate is $1$, and all other
coordinates are zero.
This notation applies  only where the ranges of $i$ and $j$ are
clearly specified.
We think of $\null^{m}\mkern-3muR^{\mkern2mu n}$  as
$\null^{m}\integers^{n}\otimes_\integers R$ and
   use the same symbol $e_{i,j}$ to denote the image
in~$\null^{m}\mkern-3muR^{\mkern2mu n}$.

We define  an additive \textit{transpose  map} $\null^{m}\integers^{n} \to
\null^{n}\integers^{m}$, $U \mapsto U^\ast$,
such that $e_{i,j}^\ast \coloneq e_{j,i}$.
We endow $\null^{m}\integers^{n}$ with the structure of a partially
ordered abelian group
in which  the \textit{positive cone}
$\operatorname{P}(\,\null^{m}\integers^{n})$ (the
set of  elements $\ge 0$)  is the additive monoid generated by
$e_{[1{\uparrow}m] \times [1{\uparrow}n]}$.

In particular, $\Mat{m}(\integers) $ has the structure of a ring with
involution $p \mapsto p^\ast$, and the structure of
a partially ordered abelian group.  We note that
the positive cone \mbox{$\operatorname{P}_{m}( \integers ) \coloneq
\operatorname{P}(\,\null^{m}\integers^{m})$}
contains $1$ and is closed under
multiplication and the involution.  Thus  $\Mat{m}(\integers)$ has
the structure of a partially ordered
ring with involution.

\end{notation}

\begin{notation}
Throughout, fix $r \in [2{\uparrow}\infty[\,$,  and fix symbols  $x$
and $y$, and let
\begin{equation*}
\L_r \coloneq \integers \gp{x_{[1{\uparrow}r]},y_{[1{\uparrow}r]}} {
x_{[\mkern-3mu[1{\uparrow}r]\mkern-3mu]}^{\transp}\cdot
    y_{[\mkern-3mu[1{\uparrow}r]\mkern-3mu]}   = \mathbf{I}_r \text{ and
}\,  y_{[\mkern-3mu[1{\uparrow}r]\mkern-3mu]}
\cdot x_{[\mkern-3mu[1{\uparrow}r]\mkern-3mu]}^{\transp} = 1}.
\end{equation*}
Here $x_{[\mkern-3mu[1{\uparrow}r]\mkern-3mu]}$ and
$y_{[\mkern-3mu[1{\uparrow}r]\mkern-3mu]}$ are $1\,{\times}\,r$ row vectors,
    $x_{[\mkern-3mu[1{\uparrow}r]\mkern-3mu]}^{\transp}$ denotes
    the $r \,{\times}\,1$ transpose of
$x_{[\mkern-3mu[1{\uparrow}r]\mkern-3mu]}$,
and $\mathbf{I}_r$~denotes the $r \,{\times}\, r$ identity matrix.

Leavitt~\cite{Leavitt0} showed that each element of $\L_r$ has a
unique \textit{normal form},
which is an expression as a $\integers$-linear combination of
   elements of  $ \langle \langle  x_{[1{\uparrow}r]} \cup
y_{[1{\uparrow}r]}
\rangle \rangle$
which do not contain any contiguous subword of the form $x_sy_{s'}\, (=
\delta_{s,s'})$, $s,\, s' \in [1{\uparrow}r]$, or
$y_rx_r \,(= 1-\sum_{s\in[1{\uparrow}(r{-}1)]} y_sx_s)$.
By Leavitt's normal-form result, the multiplicative
monoid $ \langle \langle x_{[1{\uparrow}r]}
\rangle \rangle$ is freely generated by $x_{[1{\uparrow}r]}$,
and similarly for $ \langle \langle y_{[1{\uparrow}r]}  \rangle
\rangle$.

We endow $\L_r$  with the involution $p
\mapsto p^*$ which is
the unique anti-automorphism  which interchanges
$x_{[\mkern-3mu[1{\uparrow}r]\mkern-3mu]}$
and $y_{[\mkern-3mu[1{\uparrow}r]\mkern-3mu]}$.

We endow $\L_r$ with the structure of a partially ordered abelian group
in which the positive cone   $\operatorname{P}(\L_r)$
is the additive monoid generated by the set of monomials
$ \langle \langle  x_{[1{\uparrow}r]} \cup  y_{[1{\uparrow}r]}
\rangle \rangle$.
This is a partial order since a nonempty sum of monomials is not zero.
To see this notice that for any row vector of zeros and monomials,
some of which have positive $x$-degree,  multiplying on the right by
a suitable $y_i$  leaves a nonzero vector of smaller largest
$x$-degree, and the result follows
by induction.
We note that the positive cone contains $1$ and
is closed under multiplication and the involution.  Thus $\L_r$ has
been endowed with the
structure of a partially ordered ring with involution.

Let $m,\,n,\,t \in[1{\uparrow}\infty[\,$.

We extend the involutions on each of the $t{+}1$ factors  to the
\textit{conjugate-transpose map}
\mbox{$\null^m\integers ^{\mkern1mu n} \otimes_\integers \L_r^{\otimes t}
\to\null^n\integers ^{\mkern1mu m} \otimes_\integers \L_r^{\otimes t}
$}, $U \mapsto U^*$.
Recall that we identify $\null^m\integers ^{\mkern1mu n}
\otimes_\integers \L_r^{\otimes t}
= \null^m(\L_r^{\otimes t})^{\mkern1mu n} $.
Let $\operatorname{U}(\,\null^m(\L_r^{\otimes t})^{\mkern1mu n})$
denote the set
of $Y \in \null^m(\L_r^{\otimes t})^{\mkern1mu n}$ such that
$Y\cdot Y^\ast = \mathbf{I}_m$ and $Y^\ast \cdot Y  = \mathbf{I}_n$.
The elements of $\operatorname{U}(\,\null^m(\L_r^{\otimes
t})^{\mkern1mu n})$ are
called the \textit{unitary} $m\times n$ matrices over $\L_r^{\otimes t}$.
We write $\operatorname{U}_m(\, \L_r^{\otimes t}) \coloneq
\operatorname{U}(\,\null^m(\L_r^{\otimes t})^{\mkern1mu m})$, a subgroup of
the group of units of $\Mat{m}(\,\L_r^{\otimes t})$.

We extend the partial order on each of the $t{+}1$ factors to  all of
$\null^m\integers ^{\mkern1mu n} \otimes_\integers \L_r^{\otimes t} $
by taking as the
positive cone
$\operatorname{P}(\null^m\integers ^{\mkern1mu n} \otimes_\integers
\L_r^{\otimes t})$
the additive submonoid generated by the     product of the positive
cones of the
factors;  as before, a nonempty sum of
monomials is not zero.
    We write
    $\operatorname{P}_m( \,\L_r^{\otimes t} ) \coloneq
\operatorname{P}(\,\null^m(\L_r^{\otimes t})^{\mkern1mu m})$,
a multiplicative submonoid with involution
in the ring with involution  $\Mat{m}(\,\L_r^{\otimes t})$.
Thus  $\Mat{m}(\,\L_r^{\otimes t})$ has been endowed with the structure of a
partially ordered ring with involution.

Let $\PU{}(\,\null^m(\L_r^{\otimes t})^{\mkern1mu n}) \coloneq
\operatorname{P}(\,\null^m(\L_r^{\otimes t})^{\mkern1mu n})
\cap \operatorname{U}(\,\null^m(\L_r^{\otimes t})^{\mkern1mu n})$ and
$\PU{m}(  \L_r^{\otimes t}  ) \coloneq
\PU{}(\,\null^m(\L_r^{\otimes t})^{\mkern1mu m})$.
Then $\PU{m}(\,\L_r^{\otimes t})=
\operatorname{P}_m( \L_r^{\otimes t} ) \cap \operatorname{U}_m(
\L_r^{\otimes t} )$,
an intersection
of  multiplicative monoids with involution, and hence   itself a
multiplicative monoid with involution.
Since $\PU{m}( \L_r^{\otimes t} )$ lies in
$\operatorname{U}_{m}(\,\L_r^{\otimes t})$ the involution acts as inversion and
$\PU{m}(\,\L_r^{\otimes t}) $ is a multiplicative group. We call
$\PU{m}(\,\L_r^{\otimes t}) $ the \textit{group of
positive unitary  $m\times m$ matrices over~$\L_r^{\otimes t}$}.
\end{notation}

\section{The crucial ring isomorphism}\label{sec:5}

This following beautiful result of Abrams, \'Anh and Pardo has the
unusual property that it
shows that two naturally defined rings are isomorphic
without giving a natural reason, and there may not be one.
We shall be giving their proof but shall incorporate
   a permutation of $\integers$ that will  automate
much of their book-keeping.
Although the proof we shall give uses
  $r \ne 2$ and $r < m$,  we shall see in the next section
that the result holds without these restrictions.

\begin{AAP}\label{thm:aap}  Let  $r \in [3{\uparrow} \infty[\,$  and
\mbox{$m \in [(r{+}1){\uparrow}\infty[\,$} with
$\gcd(m,r{-}1)=1$. Then $\L_r$ and $\Mat{m}(\,\L_r)$  are isomorphic
as   partially ordered rings
with involution.
\end{AAP}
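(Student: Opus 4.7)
The plan is to construct an explicit ring homomorphism $\phi\colon \L_r\to\Mat{m}(\,\L_r)$ using the universal property of $\L_r$ and then verify that it is an isomorphism of partially ordered rings with involution. Since $\L_r$ is simple, any nonzero ring homomorphism out of $\L_r$ is automatically injective, so the content of the argument lies in giving the map and in proving surjectivity.

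By the defining presentation, specifying $\phi$ is the same as choosing matrices $Y_1,\dots,Y_r\in\Mat{m}(\,\L_r)$ satisfying the Leavitt-type relations
\[
\sum_{k=1}^{r} Y_k Y_k^{\,*}=I_m,\qquad Y_i^{\,*}Y_j=\delta_{i,j}\, I_m\,,\quad i,j\in[1{\uparrow}r].
\]
We set $\phi(y_k)\coloneq Y_k$ and $\phi(x_k)\coloneq Y_k^{\,*}$, so that the involution is preserved on generators and hence on all of $\L_r$. To preserve also the positive cone, each $Y_k$ is arranged to lie in $\operatorname{P}_m(\,\L_r)$, i.e.\ to be a sum of terms $e_{i,j}\mu$ with $\mu\in\langle\langle x_{[1{\uparrow}r]}\cup y_{[1{\uparrow}r]}\rangle\rangle$.

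The combinatorial construction of the $Y_k$ is as follows. Using $\gcd(m,r{-}1)=1$, pick $n\in[1{\uparrow}r{-}1]$ with $mn\equiv 1\pmod{r{-}1}$ and write $mn=1+(r{-}1)k$. The free $r$-ary monoid $\langle\langle y_{[1{\uparrow}r]}\rangle\rangle$ then admits a complete prefix antichain $T$ of cardinality $mn$, obtained by $k$ successive expansions of the trivial antichain~$\{1\}$; the family $\{y_s y_t^{\,*}\mid s,t\in T\}$ forms a full system of $mn\times mn$ matrix units in $\L_r$, with $\sum_{t\in T} y_t y_t^{\,*}=1$. A permutation of $\integers$ is used to encode both a bijection $T\leftrightarrow [1{\uparrow}m]\times[1{\uparrow}n]$, partitioning $T$ into $m$ blocks of size $n$, and the reindexing that occurs when each $y_k$ is prepended to words of $T$. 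The matrices $Y_k$ are then assembled from the matrix units $y_sy_t^{\,*}$ via this permutation, and the two Leavitt identities above become a direct bookkeeping consequence of the antichain/prefix structure of $T$.

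The remaining checks are then (a) injectivity, which is free; (b) preservation of the positive cone and of the involution, which is built into the construction; and (c) surjectivity, which is the main obstacle. For (c) one must exhibit $\L_r$-preimages for each matrix unit $e_{i,j}$ and for each element $e_{i,j}\cdot y_k$, $e_{i,j}\cdot x_k$ of $\Mat{m}(\,\L_r)$; the permutation of $\integers$ is exactly the bookkeeping device that makes these preimages transparent and is what streamlines the original AAP argument. All three hypotheses enter at this step: $\gcd(m,r{-}1)=1$ provides the integer $n$ and hence an antichain $T$ of size divisible by $m$, while $r\ge 3$ and $r<m$ ensure that $T$ is rich enough for the block decomposition to realise all matrix units and Leavitt generators of $\Mat{m}(\,\L_r)$ as images under $\phi$.
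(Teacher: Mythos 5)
Your outline --- define $\phi$ on generators via a positive unitary row, then reduce to surjectivity --- matches the paper's strategy, but there are two genuine gaps. First, the injectivity-via-simplicity claim is false here: $\L_r$ is defined over $\integers$, not over a field, and it is not simple ($p\L_r$ is a proper nonzero two-sided ideal for each prime $p$, with quotient the Leavitt algebra over $\mathbb{F}_p$). The paper instead notes that the image is torsion-free (it contains $\mathbf{I}_m$), so $\phi$ is injective on $\integers$, and then invokes Leavitt's normal-form argument --- left-multiply a kernel element by a suitable $x$-monomial and right-multiply by a suitable $y$-monomial to land in $\integers\setminus\{0\}$ --- to conclude $\ker\phi=0$.

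Second, the construction of the $Y_k$ and the surjectivity proof, which carry the entire content of the theorem, are only gestured at, and what is sketched does not close. Taking a complete prefix antichain $T$ with $|T|=mn\equiv 1\pmod{r{-}1}$ and its matrix units $y_sy_t^{\ast}$ gives the \emph{easy} isomorphism $\L_r\cong\Mat{mn}(\L_r)$ and, after regrouping, $\L_r\cong\Mat{m}(\Mat{n}(\L_r))$; but to reach $\Mat{m}(\L_r)$ from this you would additionally need $\Mat{n}(\L_r)\cong\L_r$, which is another instance of the theorem (with $n\in[2{\uparrow}(r{-}1)]$, outside the stated hypotheses), so the route as described is circular. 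The paper's $Y$ is instead the block-diagonal positive unitary $m\times mr$ matrix $y_{[\mkern-3mu[1\uparrow r]\mkern-3mu]}\oplus\mathbf{I}_{m-2}\oplus y_{[\mkern-3mu[(m+r-1)\uparrow(mr)]\mkern-3mu]}$, whose tail entries are the $y$-monomials $y_1^{j-1}y_s$ and $y_1^{m-1}$ indexed via the permutation $\pi$ (not products $y_sy_t^{\ast}$), and surjectivity is then proved by an induction over the $\gen{\pi}$-orbit structure --- travelling around the $m$-cycle $j\mapsto(j{-}r{+}1)[\bmod m]$ --- showing successively that each $e_{j,j}$, each $e_{i,j}$, and finally each $y_ke_{i,j}$ lies in the image. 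That orbit-tracking induction is exactly where $r\ge3$, $r<m$ and $\gcd(m,r{-}1)=1$ do their work, and it is absent from your proposal; $\pi$ is used by the paper to drive that induction, not to ``reindex prefixes.''
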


\begin{proof}  Let $\L \coloneq \L_r$. Define $\pi \colon \integers
\to \integers$ by
    $$i \mapsto \  i^\pi\coloneq \begin{cases}
i{+} r &\text{if } i  \equiv  0\, (\mod m),\\
i{+} r{-}2 &\text{if }i  \equiv  1 \,(\mod m),\\
i{+} r{-}1 &\text{if } i \not\equiv 0,1\,(\mod m).
\end{cases}$$
Thus $\pi$ shifts every element of $\integers$ up by $r{-}1$,
except that certain adjacent pairs $(\ell m, \ell m {+}1)$ are carried to
    $(\ell m{+} r, \ell m {+}r{-}1)$, that is, they are shifted by $r{-}1$
and then interchanged.
Notice that $\pi$ is bijective.

We claim that    $[2{\uparrow}r]$ is a set of $\gen{\pi}$-orbit
representatives in $\integers$.
Because $\pi$ shifts every element of $\integers$ up by at most $r$
and by at least $r{-}2 \,(\,{\ge}1)$,
it follows that each $\gen{\pi}$-orbit meets~$[1{\uparrow}r]$.  Now
\mbox{$1^\pi = r{-}1 \,(\,{\ge} 2)$} which lies in
   \mbox{$ [2{\uparrow}r].$}
Hence, each $\gen{\pi}$-orbit   meets $[2{\uparrow}r]$.
Since $2^\pi = r{+}1$ and $\pi$ shifts every element of $\integers$
up by at least $r{-}2$, we see that
no $\gen{\pi}$-orbit  meets $[2{\uparrow}r]$ twice.  This proves the claim.

Now any sequence of $r{-}1$ consecutive integers is a set of
$\gen{\pi}$-orbit representatives
unless two elements are in the same $\gen{\pi}$-orbit, and the only
pairs in the
same $\gen{\pi}$-orbit
that are at distance less than $r{-}1$ are of the form $\ell m{+}1
\mapsto \ell m{+\,}r{-}1$.
If our sequence of $r{-}1$ consecutive integers does not start at an
$\ell m{+}1$,
    then it cannot contain two elements in the same $\gen{\pi}$-orbit.
    Hence,  for each $k \in \integers$,
    $[(k{+}1){\uparrow}(k{+}r{-}1)]$ is a set of $\gen{\pi}$-orbit
representatives in~$\integers$
if and only if  $k \not\equiv 0\, \mod m $.

Let \mbox{$s \in [2{\uparrow}r]$} and $j \in [1{\uparrow}(m{-}1)]$.
    Since $\gcd(m,r{-}1)=1$,  we have \mbox{$(r{-}1)j \not\equiv 0\, \mod m $.}
Hence, $[(1{+}(r{-}1)j){\uparrow}((r{-}1)(j{+}1))]$  is a set of
$\gen{\pi}$-orbit representatives in $\integers$ and therefore contains
   a unique element in the $\gen{\pi}$-orbit of $s$.
We denote that element by
$s \# j$.  Thus $s \# j\,\, \in
\,\,[(1{+}(r{-}1)j){\uparrow}((r{-}1)(j{+}1))]$ and $(s \#
j)^{\gen{\pi}} = s^{\gen{\pi}}$.
In $\L$, define  \mbox{$y_{(s\#j) + m} \coloneq  y_1^{j-1}y_s$.}
Define \mbox{$y_{r+m-1} \coloneq y_1^{m-1}$.}
For each $k \in  [(r{+}m{-}1){\uparrow}(mr)]$,   define $x_k \coloneq
y_k^\ast$.
We claim that we have defined
$y_{[\mkern-3mu[(r+m-1){\uparrow}(mr)]\mkern-3mu]}$  with underlying
set
$ y_1^{[0{\uparrow}(m{-}2)]} y_{[2{\uparrow}r]} \cup \{ y_1^{m-1}\}$.
For each \mbox{$j \in [1{\uparrow}(m{-}1)]$}, varying $s\in [2{\uparrow}r]$,
we see that $$[2{\uparrow}r]\#j = [(1{+}(r{-}1)j){\uparrow}(r{-}1)(j{+}1)],$$
and then $$[2{\uparrow}r]\#j  + m =
[(1{+}(r{-}1)j{+}m){\uparrow}((r{-}1)(j{+}1){+}m)],$$
and we have defined
$y_{[\mkern-3mu[(1{+}(r{-}1)j + m){\uparrow}((r{-}1)(j{+}1)+m)]\mkern-3mu]}$
with underlying  set
$  y_1^{j-1}y_{[2{\uparrow}r]}$.
By then varying~$j \in [1{\uparrow}(m{-}1)]$, we
obtain    $y_{[\mkern-3mu[(r+m){\uparrow}(mr)]\mkern-3mu]}$ with underlying set
$ y_1^{[0{\uparrow}(m{-}2]} y_{[2{\uparrow}r]}$.
Thus we have defined  $y_{[\mkern-3mu[(r+m-1){\uparrow}(mr)]\mkern-3mu]}$
with underlying set $ y_1^{[0{\uparrow}(m{-}2)]} y_{[2{\uparrow}r]}
\cup \{ y_1^{m-1}\}$.
It is easy to see that
    $y_{[\mkern-3mu[(r+m-1){\uparrow}(mr)]\mkern-3mu]} \in
\PU{}(\,\null^1\L^{(m-1)(r-1)+1})$.

Let
\begin{equation*}
Y \coloneq y_{[\mkern-3mu[1{\uparrow}r]\mkern-3mu]} \oplus \mathbf{I}_{m-2}
\oplus y_{[\mkern-3mu[(m+r-1){\uparrow}(mr)]\mkern-3mu]}
= \begin{pmatrix}
y_{[\mkern-3mu[1{\uparrow}r]\mkern-3mu]}&0&0\\
0&\mathbf{I}_{m-2}&0\\
0&0&y_{[\mkern-3mu[(m+r-1){\uparrow}(mr)]\mkern-3mu]}
\end{pmatrix}\,\, \in\,\, \PU{}(\,\null^{m}\L^{\,mr}).
\end{equation*}
We identify $\null^{m} \L^{ mr} = (\Mat{m}(\,\L))^r$, and let
$Y_{[\mkern-3mu[1{\uparrow}r]\mkern-3mu]}$ denote the resulting
partition of $Y$, that is,
\mbox{$Y =  Y_{[\mkern-3mu[1{\uparrow}r]\mkern-3mu]}   \in (\Mat{m}(\L))^r$}.

We then have a well-defined homomorphism $\L  \to
\Mat{m}(\,\L)$ that sends
$y_{[\mkern-3mu[1{\uparrow}r]\mkern-3mu]}$
to~$Y_{[\mkern-3mu[1{\uparrow}r]\mkern-3mu]}$  and  sends
$x_{[\mkern-3mu[1{\uparrow}r]\mkern-3mu]}^{\transp}
(=(y_{[\mkern-3mu[1{\uparrow}r]\mkern-3mu]})^{-1})$ to
$X_{[\mkern-3mu[1{\uparrow}r]\mkern-3mu]}^{\transp}
\coloneq Y^{-1} = Y^{\ast}$.

This homomorphism is nonzero with torsion-free image, and hence is
injective on $\integers$, and hence is
injective, by the following argument of Leavitt~\cite[Theorem 2]{Leavitt2}.
Consider any nonzero element of the kernel.  By multiplying on the
left by a suitable $x$-monomial,
we get a nonzero element in the free $x$-subalgebra.  By multiplying
on the right by a
suitable $y$-monomial, we get a nonzero element of $\integers$, which
is the desired
contradiction.

Let $S$ denote the image of $\operatorname{P}_{\mkern-3mu 1}(\,\L)$,
that is, the
additive monoid that is generated by the multiplicative monoid that
is generated
by $Y_{[1{\uparrow}r]} \cup Y_{[1{\uparrow}r]}^\ast$ in
$\Mat{m}(\,\L)$.  Clearly $S^* = S \subseteq \operatorname{P}_m(\,\L)  $.
It remains to show that $\operatorname{P}_m(\,\L)    \subseteq S$,
for then the injective map $\L  \to
\Mat{m}(\,\L)$ is surjective and the
resulting inverse map carries $\operatorname{P}_{\mkern-3mu m}(\,\L)$ into
$\operatorname{P}_{\mkern-3mu 1}(\,\L)$.

Since $m > r > 2$,\vspace{-3mm}
\allowdisplaybreaks
\begin{align}
Y_1  &= \textstyle\sum\limits_{j\in[1{\uparrow}r]} y_j e_{1,j}
    + \sum\limits_{j\in[(r+1){\uparrow}m]}e_{j-r+1,j}, \label{eq:Y1}\\
Y_2 &=\textstyle\sum\limits_{j\in[1{\uparrow}(r-2)]} \mkern-15mu e_{j+m-r+1,j }
    + \sum\limits_{j\in[(r-1){\uparrow}m]} y_{j+m} e_{m,j}, \label{eq:Y2}\\
    Y_{s} &= \textstyle\sum\limits_{j\in [1{\uparrow}m]} y_{j+(s-1)m}e_{m,j}\,
\text{ for each }    s \in [3{\uparrow}r], \label{eq:Y3}\\
     Y_1^\ast
&\overset{\eqref{eq:Y1}}{=}\textstyle\sum\limits_{j\in[1{\uparrow}r]}
x_j e_{j,1}
    + \sum\limits_{j\in[(r+1){\uparrow}m]}e_{j,j-r+1},\label{eq:Y4}\\
Y_1 Y_1^{\ast} &\overset{\eqref{eq:Y1},\eqref{eq:Y4}}{=}
\textstyle\sum\limits_{j\in[1{\uparrow}(m-r+1)]} e_{j,j},\label{eq:Y5}\\
Y_1e_{j,j}Y_1^{\ast} &\overset{\eqref{eq:Y1},\eqref{eq:Y4}}{=} e_{j-r+1,j-r+1}
\text{ for each }    j \in [(r{+}1){\uparrow}m],\label{eq:Y6}\\
e_{1,1}Y_1e_{j,j} &\overset{\eqref{eq:Y1}}{=} y_je_{1,j}
\text{ for each }    j \in [1{\uparrow}r],\label{eq:Y7}\\
e_{j-r+1,j-r+1}Y_1e_{j,j} &\overset{\eqref{eq:Y1}}{=} e_{j-r+1,j}
\text{ for each }    j \in [(r{+}1){\uparrow}m],\label{eq:Y8}\\
Y_2^\ast &\overset{\eqref{eq:Y2}}{=}\textstyle
\sum\limits_{j\in[1{\uparrow}(r-2)]} \mkern-15mu e_{j ,j+m-r+1}
    + \sum\limits_{j\in[(r-1){\uparrow}m]} x_{j+m} e_{j,m},\label{eq:Y9}\\
Y_2e_{j,j}Y_2^{\ast} &\overset{\eqref{eq:Y2},\eqref{eq:Y9}}{=}
e_{j+m-r+1,j+m-r+1}
\text{ for each }    j \in [1{\uparrow}(r{-}2)],\label{eq:Y10} \\
    e_{j+m-r+1,j+m-r+1}Y_2e_{j,j}  &\overset{\eqref{eq:Y2}}{=} e_{j+m-r+1,j}
\text{ for each }    j \in [1{\uparrow}(r{-}2)],\label{eq:Y11} \\
e_{m,m}Y_2e_{j,j}  &\overset{\eqref{eq:Y2}}{=} y_{j+m}e_{m,j}
\text{ for each }    j \in [(r{-}1){\uparrow}m],\label{eq:Y12} \\
e_{m,m}Y_{s}e_{j,j} &\overset{\eqref{eq:Y3}}{=}   y_{j+(s-1)m}e_{m,j}
\text{ for each }
j \in [1{\uparrow}m],\,\, s \in [3{\uparrow}r].\label{eq:Y13}
\end{align}

\medskip

\noindent\textbf{\ref{thm:aap}.1~Definition. The $m$-cycle $j \mapsto
(j{-}r{+}1)[\mod m]$.}

For each $j \in \integers$, let
$j [\mod  m]$ denote the representative of $j{+}m\integers$ in
$[1{\uparrow}m]$.

Since it is a unit in $\integers_m$, $(r{-}1){+}m\integers$
additively generates a
subgroup of order $m$ in $\integers_m$, and hence shifting down by $r{-}1$
determines an $m$-cycle on $\integers_m$.
Hence  $j \mapsto
(j{-}r{+}1)[\mod m]$ determines an $m$-cycle  on $[1{\uparrow}m]$.
We think of this $m$-cycle as an $m$-gon with two distinguished
sides, $r{-}1 \mapsto m$ and
$r \mapsto 1$. \vspace{-3mm}

\begin{equation}\label{eq:diag}
\begin{matrix}
&m &\mapsto &\cdots &\mapsto &r\\[-2.3mm]
& &&& &{\rotatebox{270}{$\mapsto$}}\\[-4.3mm]
&{\rotatebox{90}{$\mapsto$}}&&& & \\
&r{-}1  &\mapsfrom &\cdots& \mapsfrom   &1
\end{matrix}
\end{equation}

\medskip

\noindent\textbf{\ref{thm:aap}.2~Claim. Both $e_{1,1}$ and $e_{m,m}$
lie in $S$.}

For each $i \in [1{\uparrow}m]$, let
$E_i \coloneq \sum\limits_{j\in[1{\uparrow}i]} e_{j,j}$ and
$E_i' \coloneq \sum\limits_{j\in[(i{+}1){\uparrow}m]} e_{j,j} =
\mathbf{I}_m {-} E_i$.
We shall show   that $E_{i} \in S$  and $E_{i}' \in S$ by letting $i$ travel
around~\eqref{eq:diag} from $m$ to $r{-}1$.
This claim is clear for   $i=m$.

Now suppose that $i \in [1{\uparrow}(r{-}2)] \cup [r{\uparrow}m]$
such that $E_i,\, E'_i \in S$.

If $i \in  [1{\uparrow}(r{-}2)]$, then
\begin{align*}
E_{ i } &= \textstyle\sum\limits_{j\in [1{\uparrow}i] \subseteq
[1{\uparrow}(r{-}2)]}\hskip-20pt e_{j,j} \text{, and }
    (i{-}r{+}1)[\mod m]  = i{+}m{-}r{+}1,\\
    E_{i{+}m{-}r{+}1} &\overset{\eqref{eq:Y10}}{=}
    Y_2E_{i}Y_2^{\ast} + E_{m{-}r{+}1}
    \overset{\eqref{eq:Y5}}{=}     Y_1 Y_1^\ast + Y_2E_{i}Y_2^{\ast} \in S,\\
    E'_{i{+}m{-}r{+}1} &= \mathbf{I}_m  {-} E_{i{+}m{-}r{+}1}
    = \textstyle\sum\limits_{s\in[1{\uparrow}r]} Y_sY_s^*- Y_1
Y_1^\ast - Y_2E_{i
}Y_2^{\ast}
= Y_2E_{ i }'Y_2^{\ast}+ \sum\limits_{s\in[3{\uparrow}r]} Y_sY_s^* \in S.
\end{align*}

If $i \in [r{\uparrow}m]$, then
\begin{align*}
E'_{i}
&= \textstyle\sum\limits_{j\in[i+1,m] \subseteq
[(r{+}1){\uparrow}m]}\hskip-20pt  e_{j,j}  \text{, and }
(i{-}r{+}1)[\mod m]  =  i{-}r{+}1, \\
E'_{ i{-}r{+}1}  &\overset{\eqref{eq:Y6}}{=}  Y_1E'_{i}Y_1^{\ast} + E'_{m-r+1}
    \overset{\eqref{eq:Y5}}{=} Y_1E'_{i}Y_1^{\ast} +  \textstyle
\sum\limits_{s\in
[2{\uparrow}r]} Y_s Y_s^\ast \in S,
\\E_{i{-}r{+}1} &=  \mathbf{I}_m {-}
E'_{i{-}r{+}1}=\textstyle\sum\limits_{s\in[1{\uparrow}r]}
Y_sY_s^*- Y_1E'_{i}Y_1^{\ast}
-   \sum\limits_{s\in [2{\uparrow}r]} Y_s Y_s^\ast=Y_1 E_{i} Y_1^* \in S.
\end{align*}

It now follows by induction on path-length in~\eqref{eq:diag} that,
for each $i \in [1{\uparrow}m]$, $ E_{i} \in S$ and $ E'_{i} \in S$.
Hence, $e_{1,1} = E_1 \in S$ and $e_{m,m} = E_{m-1}' \in S$.
We could have stopped when we had reached whichever came later of $1$
and $m{-}1$.

\medskip

\noindent\textbf{\ref{thm:aap}.3~Claim. Each $e_{j,j}$ lies in $S$.}

Let $j \in [1{\uparrow}m]$.  We shall show that  $e_{j,j} \in S$
by letting $j$ travel  along the top  of~\eqref{eq:diag} from $m$ to $r$
and  along the bottom of~\eqref{eq:diag}  from $1$ to $r{-}1$.   We
have proved the claim for $j= m$ and
$j=1$.  Now suppose that
$j \in [1{\uparrow}(r{-}2)] \cup  [(r{+}1){\uparrow}m]$ such that
$e_{j,j} \in S$.

If  $j \in [1{\uparrow}(r{-}2)]$, then\vspace{-3mm} $$(j{-}r{+}1)[\mod m]  =
j{+}m{-}r{+}1 \text{ and }
    e_{j+m-r+1,j+m-r+1}  \overset{\eqref{eq:Y10}}{=}
Y_2e_{j,j}Y_2^{\ast} \in S.$$

If     $j \in [(r{+}1){\uparrow}m]$, then\vspace{-3mm}
$$(j{-}r{+}1)[\mod m]  = j {-}r{+}1 \text{ and }   e_{j-r+1,j-r+1}
\overset{\eqref{eq:Y6}}{=} Y_1e_{j,j}Y_1^{\ast} \in S.$$

It now follows by induction on path-length in~\eqref{eq:diag} that
$e_{j,j} \in S$ for all $j \in [1{\uparrow}m]$.

\medskip

\noindent \textbf{\ref{thm:aap}.4~Review. The $e_{i,i}Y_s e_{j,j}$ lie in $S$.}

We have now shown that for all $i,\, j \in [1{\uparrow}m]$, and all
$s \in [1{\uparrow}r]$,
$e_{i,i}Y_s e_{j,j}\in S$.
This has the following consequences.
\allowdisplaybreaks
\begin{align}
&\text{For each    $j \in [1{\uparrow}r]$,
$y_je_{1,j}\overset{\eqref{eq:Y7}}{\in} S$.}\label{eq:Y14}
\\&\text{For each $ j \in [(r{+}1){\uparrow}m]$, $ e_{j-r+1,j}
\overset{\eqref{eq:Y8}}{\in} S$.}\label{eq:Y15}
\\&\text{For each $ j \in [1{\uparrow}(r{-}2)]$, $ e_{j+m-r+1,j}
\overset{\eqref{eq:Y11}}{\in} S$.}\label{eq:Y16}
\\&\text{For each $ j \in [(r{-}1){\uparrow}m]$,
$y_{j+m}e_{m,(j+m)[\mod m]}= y_{j+m}e_{m,j}
    \overset{\eqref{eq:Y12}}{\in} S$.}\label{eq:Y17}
\\&\text{For each  $j \in [1{\uparrow}m]$ and $s \in [3{\uparrow}r]$,
$ y_{j+(s-1)m} e_{m,(j+(s-1)m)[\mod m]} =
y_{j+(s-1)m}e_{m,j}\overset{\eqref{eq:Y13}}{\in} S$.}\label{eq:Y18}
\\&\text{For each $k \in [(r{-}1{+}m){\uparrow}(mr)]$, $y_{k} e_{m,k[\mod m]}
\overset{\eqref{eq:Y17},\eqref{eq:Y18}}{\in}S$.}\label{eq:Y19}
\\&y_1^{m-1} e_{m,r-1} = y_{r-1+m}e_{m,(r-1+m)[\mod m]}
\overset{\eqref{eq:Y19}}{\in}S.\label{eq:Y20}
\\&\text{For \mbox{$j \in [1{\uparrow}(m{-}1)]$}, \mbox{$s \in
[2{\uparrow}r]$},
\,\,$y_1^{j-1}y_s e_{m, (s\#j) [\mod m]}  $ }   \label{eq:Y21}
\\&  \hskip6cm
    = y_{(s\#j)+m}e_{m,((s\#j)+m)[\mod
m]}\overset{\eqref{eq:Y19}}{\in}S.\nonumber
\end{align}

\medskip

\noindent \textbf{\ref{thm:aap}.5~Claim. All the $e_{i,j}$ lie in $S$.}

It follows from~\eqref{eq:Y15} and~\eqref{eq:Y16} that for each edge
$j \mapsto j'$ in the top  of diagram~\eqref{eq:diag},
we have $e_{j',j} \in S$.  Since
    $e_{j_1,j_2}e_{j_2,j_3} = e_{j_1,j_3}$, we see that for any
subpath   $j\mapsto\cdots \mapsto j'$  of  the top of
diagram~\eqref{eq:diag}, we have  $e_{j',j} \in S$,
and  $e_{j,j'}  = e_{j',j}^\ast \in S$.
Thus if $j,j'$ are two points on the top of the
diagram~\eqref{eq:diag},   then   $e_{j',j} \in S$.
The same result holds  for the bottom of the diagram~\eqref{eq:diag}.
To obtain  $e_{[1{\uparrow}m]\times [1{\uparrow}m]} \subseteq S$, it
now suffices to
show that $e_{1,m} \in S$.

Recall that for $s \in [2{\uparrow}r]$ and $j \in [1{\uparrow}(m{-}1)]$,
    $s$ and $ s \# j $ lie in the same $\gen{\pi}$-or\-bit.  It is clear that
$\pi$ induces an action modulo $m$, and hence induces a permutation
$\pi_m$ of~$[1{\uparrow}m]$.
Hence $s $ and $ (s \# j) [\mod m]$  lie in the same $\gen{\pi_m}$-or\-bit.
On $[2{\uparrow}(m{-}1)]$, $\pi_m$ acts as \mbox{$i \mapsto
(i{+}r{-}1)[\mod m]$}, while
$1 \mapsto r{-}1$ and $m \mapsto r$. It follows that there are two
$\gen{\pi_m}$-or\-bits  and they are given by the top and the bottom
of the diagram~\eqref{eq:diag}.
     Hence  $e_{s, (s\#j) [\mod m]} \in S$.

In $\L $, $1 = y_1^{m-1}x_1^{m-1}    \textstyle +
    \sum\limits_{j\in[1{\uparrow}(m-1)]} \sum\limits_{s\in[2{\uparrow}r]}
    (y_1^{j-1}y_sx_sx_1^{j-1})$.
Hence, in $\Mat{m}(\,\L)$,
\begin{align*}
e_{1,m} =\,\,& y_1^{m-1}x_1^{m-1}e_{1,m}   \textstyle +
    \sum\limits_{j\in[1{\uparrow}(m-1)]} \sum\limits_{s\in[2{\uparrow}r]}
    (y_1^{j-1}y_sx_s x_1^{j-1}e_{1,m}) \\=\,\,
&(y_1e_{1,1})^{m-1}(e_{1, r-1 })(x_1^{m-1}e_{ r-1 ,m})\\
& \textstyle + \hskip-17.5pt \sum\limits_{j\in[1{\uparrow}(m-1)]}
\sum\limits_{s\in[2{\uparrow}r]}
    (y_1e_{1,1})^{j-1} (y_se_{1,s})(e_{s, (s\#j) [\mod m]}) (x_s
x_1^{j-1}e_{ (s\#j) [\mod m],m}).
\end{align*}
Using~\eqref{eq:Y14},~\eqref{eq:Y20}, and~\eqref{eq:Y21}, and the
fact that $S = S^\ast$,
we see that $e_{1,m} \in S$.

Now $e_{[1{\uparrow}m]\times [1{\uparrow}m]} \subseteq S$.  By~\eqref{eq:Y14},
$y_{[1{\uparrow}r]}e_{[1{\uparrow}m]\times [1{\uparrow}m]}  \subseteq
S$.  Hence $\operatorname{P}_m(\,\L)
\subseteq S$.
This completes the proof.
\end{proof}

\begin{example}  Let us illustrate the proof of Theorem~\ref{thm:aap}
by considering the case $r = 3$ and $m=5$; here $\gcd(m,r{-}1)=
\gcd(5,2)=1$ and $\Mat{5}(\,\L_3) \iso \L_3$.

We find that the cycle decomposition of $\pi$ is $(\ldots, 0, 3,
5,8,10,\ldots )(\ldots, 1,2,4,6,7,9, \ldots).$

Now $y_{m+r-1} \coloneq y_1^{m-1}$, that is, $y_{7} \coloneq y_1^4$.

Now consider $s \in [2{\uparrow}r] = [2{\uparrow}3]$ and $j \in
[1{\uparrow}(m{-1})] = [1{\uparrow}4]$.
We defined $$\{s\#j\} \coloneq
[(1+(r{-}1)j){\uparrow}((r{-}1)(j{+1}))] \cap s^\pi
= [(1+2j){\uparrow}(2j{+}2)]\cap s^\pi.$$
Thus $\{s\#1\} = [3{\uparrow}4] \cap s^\pi$, $\{s\#2\} =
[5{\uparrow}6] \cap s^\pi$, $\{s\#3\}  = [7{\uparrow}8] \cap s^\pi $
and $\{s\#4\} = [9{\uparrow}10] \cap s^\pi$.
For $s = 2$, we are in the set $\{\ldots, 4,6,7,9,\ldots \}$, and for
$s=3$, we are in the set
$\{\ldots, 3,5,8,10, \ldots\}$.
Thus

$2\#1 = 4$, \,\,\, $2\#2 = 6$, \,\,\, $2\#3 = 7$,\,\,\, $2\#4 = 9$,

$3\#1 = 3$, \,\,\, $3\#2 = 5$, \,\,\, $3\#3 = 8$ \,\,\,  $3\#4 = 10$.

\noindent
We define $y_{(s\#j)+m} \coloneq y_1^{j-1} y_s$, that is,
$y_{(s\#j)+5} \coloneq y_1^{j-1} y_s$. Thus

$y_{9} =   y_2$, $y_{11} = y_1 y_2$, $y_{12} = y_1^2y_2$, $y_{14} = y_1^3y_2$,

$y_{8} = y_3$, $y_{10} = y_1y_3$, $y_{13} = y_1^2y_3$ and $y_{15} =
y_1^3y_3$.  Hence

\noindent \mbox{$y_{7} = y_1^4$, $y_{8} = y_3$, $y_{9} =   y_2$,
$y_{10} = y_1y_3$, $y_{11} = y_1 y_2$, $y_{12} = y_1^2y_2$,
    $y_{13} = y_1^2y_3$, $y_{14} {=} y_1^3y_2$,   $y_{15} {=} y_1^3y_3$.}

\noindent Thus $y_{[\mkern-3mu[(r+m-1){\uparrow}(mr)]\mkern-3mu]}=
y_{[\mkern-3mu[7{\uparrow}15]\mkern-3mu]} =
(y_1^{\scriptscriptstyle 4}, y_3,y_2,y_1y_3, y_1y_2, y_1^2y_2,
y_1^2y_3, y_1^3y_2, y_1^3y_3)$.  Now we take
\mbox{$Y \coloneq
y_{[\mkern-3mu[1{\uparrow}r]\mkern-3mu]} \oplus \mathbf{I}_{m-2}
\oplus y_{[\mkern-3mu[(r+m-1){\uparrow}(mr)]\mkern-3mu]}
= y_{[\mkern-3mu[1{\uparrow}3]\mkern-3mu]} \oplus \mathbf{I}_{3}
\oplus y_{[\mkern-3mu[7{\uparrow}15]\mkern-3mu]}$}.
Hence

$$Y=  \left(\begin{smallmatrix}
    y_1&  y_2&\ y_3&0&0&0    &0  &0  &0  &0  &0     &0    &0      &0&0\\
0  &0  &0  & 1  &0  &0   &0  &0  &0  &0  &0     &0    &0      &0&0\\
0  &0  &0  &0  &1  &0   &0  &0  &0  &0  &0     &0    &0      &0&0\\
0  &0  &0  &0  &0  &1   &0  &0  &0  &0  &0     &0    &0      &0&0\\
0  &0  &0  &0  &0  &0   & y_1^{4}& y_3& y_2& y_1y_3
&y_1y_2& y_1^2y_2& y_1^2y_3& y_1^3y_2& y_1^3y_3
\end{smallmatrix}\right).$$

Now we partition $Y$ as

    $Y_1=  \left(\begin{smallmatrix}
    y_1&  y_2&  y_3&0&0 \\
0  &0  &0  & 1  &0  \\
0  &0  &0  &0  &1   \\
0  &0  &0  &0  &0    \\
0  &0  &0  &0  &0
\end{smallmatrix}\right),$
$Y_2=  \left(\begin{smallmatrix}
    0    &0  &0  &0  &0  \\
      0   &0  &0  &0  &0  \\
      0  &0  &0  &0  &0    \\
    1   &0  &0  &0  &0  \\
    0   & y_1^{4}& y_3& y_2& y_1y_3
\end{smallmatrix}\right),$
$Y_3=  \left(\begin{smallmatrix}
    0     &0    &0      &0&0\\
0     &0    &0      &0&0\\
0     &0    &0      &0&0\\
0     &0    &0      &0&0\\
y_1y_2& y_1^2y_2& y_1^2y_3& y_1^3y_2& y_1^3y_3
\end{smallmatrix}\right).$

\medskip

We let $X_i \coloneq Y_i^\ast$.

Now

\noindent $Y_1 X_1  = \left(\begin{smallmatrix}
    y_1&  y_2&  y_3&0&0 \\
0  &0  &0  & 1  &0  \\
0  &0  &0  &0  &1   \\
0  &0  &0  &0  &0    \\
0  &0  &0  &0  &0
\end{smallmatrix}\right) \left(\begin{smallmatrix}
    x_1&  0&  0&0&0 \\
x_2  &0  &0  & 0  &0  \\
x_3 &0  &0  &0  &0   \\
0  &1  &0  &0  &0    \\
0  &0  &1  &0  &0
\end{smallmatrix}\right) = \left(\begin{smallmatrix}
    1&  0&  0&0&0 \\
0  &1  &0  & 0  &0  \\
0  &0  &1  &0  &0   \\
0  &0  &0  &0  &0    \\
0  &0  &0  &0  &0
\end{smallmatrix}\right)= E_3.$

\medskip

\noindent Thus, $E_3 = Y_1  X_1$.  Hence, $E_3' = Y_2  X_2  + Y_3  X_3$.

\medskip

Now
\noindent $Y_1 E_3' X_1 =$ \\ $ \left(\begin{smallmatrix}
    y_1&  y_2&  y_3&0&0 \\
0  &0  &0  & 1  &0  \\
0  &0  &0  &0  &1   \\
0  &0  &0  &0  &0    \\
0  &0  &0  &0  &0
\end{smallmatrix}\right)
\left(\begin{smallmatrix}
    0&  0&  0&0&0 \\
0  &0  &0  & 0  &0  \\
0  &0  &0  &0  &0   \\
0  &0  &0  &1  &0    \\
0  &0  &0  &0  &1
\end{smallmatrix}\right)
    \left(\begin{smallmatrix}
    x_1&  0&  0&0&0 \\
x_2  &0  &0  & 0  &0  \\
x_3 &0  &0  &0  &0   \\
0  &1  &0  &0  &0    \\
0  &0  &1  &0  &0
\end{smallmatrix}\right) =
    \left(\begin{smallmatrix}
0&  0&  0&0&0 \\
0  &0  &0  & 1  &0  \\
0  &0  &0  &0  &1   \\
0  &0  &0  &0  &0    \\
0  &0  &0  &0  &0
\end{smallmatrix}\right)
    \left(\begin{smallmatrix}
    x_1&  0&  0&0&0 \\
x_2  &0  &0  & 0  &0  \\
x_3 &0  &0  &0  &0   \\
0  &1  &0  &0  &0    \\
0  &0  &1  &0  &0
\end{smallmatrix}\right) = \left(\begin{smallmatrix}
    0&  0&  0&0&0 \\
0  &1  &0  & 0  &0  \\
0  &0  &1  &0  &0   \\
0  &0  &0  &0  &0    \\
0  &0  &0  &0  &0
\end{smallmatrix}\right)= E_1'{-}E_3'.$

\medskip

\noindent Hence
$E_1' =  Y_1 E_3' X_1+E_3'   = Y_1 Y_2 X_2  X_1  + Y_1 Y_3 X_3  X_1
+ Y_2 X_2  + Y_3 X_3$ which we abbreviate to
$E_1' =   Y_{1,2}X_{2,1} + Y_{1,3} X_{3,1} + Y_2 X_2  + Y_3 X_3$.
Hence $E_1 = Y_{1,1} X_{1,1}$.

\medskip

Similar straightforward calculations show that
    $Y_2 E_1 X_2  =  E_4{-}E_3.$  Hence

\noindent  $E_4 = E_3 + Y_2 E_1 X_2  = Y_1 X_1 + Y_{2,1,1} X_{1,1,2}$. Hence

\noindent  $E_4' = Y_{2,1,2}X_{2,1,2} + Y_{2,1,3} X_{3,1,2} + Y_{2,2} X_{2,2}
    + Y_{2,3} X_{3,2} +  Y_3X_3$.

\medskip

     Similarly, $Y_1 E_4' X_1  = E_2' - E_3'.$  Hence
    $E_2' =  Y_1 E_4' X_1 +E_3' $.  Hence

\noindent $E_2' =  Y_{1,2,1,2} X_{2,1,2,1} + Y_{1,2,1,3}X_{3,1,2,1}+
Y_{1,2,2}X_{2,2,1} + Y_{1,2,3}X_{3,2,1} + Y_{1,3}X_{3,1} + Y_2 X_2 +
Y_3 X_3$.  Hence

\noindent  $ E_2 = Y_{1,1}X_{1,1} + Y_{1,2,1,1} X_{1,1,2,1}$.

Now

\noindent $e_{1,1} = E_1   = Y_{1,1} X_{1,1} $.

\noindent $e_{4,4} = Y_2 e_{1,1} X_2  =    Y_{2,1,1}X_{1,1,2} $.

\noindent $e_{2,2} =  Y_1 e_{4,4} X_1  =  Y_{1,2,1,1}   X_{1,1,2,1}$.

\noindent $e_{5,5} =   E_4' =
Y_{2,1,2}X_{2,1,2}+Y_{2,1,3}X_{3,1,2}+Y_{2,2}X_{2,2}+Y_{2,3}X_{3,2}+Y_3X_3$.

\noindent  $e_{3,3} =  Y_1 e_{5,5} X_1  =
Y_{1,2,1,2}X_{2,1,2,1}+Y_{1,2,1,3}X_{3,1,2,1}+
Y_{1,2,2}X_{2,2,1}+Y_{1,2,3}X_{3,2,1}+Y_{1,3}X_{3,1}$.

The interested reader can calculate the expressions for the remaining
$e_{i,j}$.
\end{example}

\section{The Abrams-\'Anh-Pardo Theorem}\label{sec:7}

The following is a straightforward consequence of the
Chinese remainder theorem; the earliest mention of it that we have found
   is~\cite[p.\ 466, line 9]{Kaplansky}.

\begin{lemma}\label{lem:kap}
Let $m_1$, $m_2$, $s   \in  \integers$.  If  $\gcd(m_1,s) = \gcd(m_2,s)$, then
there exists $u\in \integers$ such that\, $um_1  \equiv m_2
\,\,{\normalfont\mod} s$ and $\gcd(u,s) = 1$.
\end{lemma}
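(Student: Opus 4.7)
The plan is to reduce the problem to an application of the Chinese Remainder Theorem after factoring out the common gcd. Let $d \coloneq \gcd(m_1,s) = \gcd(m_2,s)$. Assuming $s\ne 0$ (the cases $s=0$ or $m_1=0$ being handled directly by taking $u=\pm 1$), write $m_1 = d m_1'$, $m_2 = d m_2'$, $s = d s'$, where $\gcd(m_i',s') = 1$ for $i=1,2$. By cancelling $d$, the congruence $u m_1 \equiv m_2 \,(\operatorname{mod} s)$ is equivalent to $u m_1' \equiv m_2' \,(\operatorname{mod} s')$. Since $m_1'$ is a unit modulo $s'$, this determines a unique residue class $u_0$ modulo $s'$; moreover $\gcd(u_0,s') = 1$ because $\gcd(m_2',s') = 1$.

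It remains to find a representative $u \equiv u_0 \,(\operatorname{mod} s')$ with the extra property $\gcd(u,d) = 1$, since any such $u$ will satisfy $\gcd(u,s) = \gcd(u,ds') = 1$, as required. Here I would introduce $P \coloneq \prod p$, the product ranging over primes $p$ dividing $d$ but not $s'$. Then $\gcd(P,s') = 1$, so by the Chinese Remainder Theorem one can choose $u \in \integers$ with $u \equiv u_0 \,(\operatorname{mod} s')$ and $u \equiv 1 \,(\operatorname{mod} P)$.

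To verify the gcd condition, consider any prime $p$ dividing $d$. If $p \mid s'$, then $u \equiv u_0 \,(\operatorname{mod} p)$ and $p\nmid u_0$ (since $\gcd(u_0,s')=1$), so $p\nmid u$. If $p\nmid s'$, then $p \mid P$, so $u \equiv 1 \,(\operatorname{mod} p)$ and again $p\nmid u$. Therefore $\gcd(u,d)=1$, completing the proof. The only real subtlety is the possibility that $d$ and $s'$ share prime divisors, which is exactly what the introduction of $P$ (picking out only the primes of $d$ coprime to $s'$) is designed to address; this is the step that deserves careful bookkeeping, but it is not deep.
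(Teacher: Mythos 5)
Your proof is correct, and it takes a genuinely different route from the paper's. The paper works abstractly inside the ring $\integers_s$: it sets $a = m_1 + s\integers$, $b = m_2 + s\integers$, finds elements $c,d$ with $ad=b$, $bc=a$, reduces to producing a unit $x$ with $ax = ad$ under the relation $a(1-cd)=0$, and then uses the Chinese Remainder Theorem to split $\integers_s$ into factors $\integers_{p^k}$, where in each factor either $a=0$ (take $x=1$) or $1-cd$ is a zerodivisor, hence nilpotent, hence $cd$ and so $d$ is a unit (take $x=d$). Your approach instead cancels the gcd $d$ right away, observes that $m_1'$ is invertible mod $s'$ so the congruence has a unique solution class $u_0$ coprime to $s'$, and then uses CRT only to adjust $u_0$ so as to also be coprime to $d$ — introducing $P$, the product of the primes of $d$ not dividing $s'$, to handle the possible overlap of prime factors between $d$ and $s'$. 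Both proofs hinge on CRT, but yours avoids the nilpotency/zerodivisor argument entirely and is more elementary and constructive for this particular statement; the paper's ring-theoretic argument is perhaps more in keeping with the rest of its style but does not buy anything extra here. One small point of bookkeeping: your parenthetical dispatch of the case $m_1 = 0$, $s \ne 0$ by "taking $u=\pm 1$" should really say $u=1$ (one just needs $s \mid m_2$, which the gcd hypothesis forces), but this is cosmetic and in fact your main argument handles $m_1=0$ correctly on its own.
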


\begin{proof}
Note first that if $s=0$ then $m_1=\pm m_2$ and we can take $u=\pm
1$. Thus we may assume $s\neq 0$.

Let $g \coloneq \gcd(m_1,s) = \gcd(m_2,s)$. There exist $n_1,\,n_2
\in \integers$ such that $ n_1 g  = m_1 $ and $n_2 g  = m_2$.
  By Euclid's lemma, there exist   $k_1,\, k_2 \in \integers$ such
that $m_1k_1 \equiv  g  \,\mod s$
and $m_2k_2 \equiv  g  \,\mod s$.

Let $R \coloneq \integers_s$, $a \coloneq m_1
+ s\integers$,  $b \coloneq m_2 + s\integers$, $c \coloneq n_1k_2 + s
\integers$, $d \coloneq n_2k_1 + s\integers$.

We have $a$, $b$, $c$, $d \in R$ such that
$ad = b$ and $bc = a$,  and it suffices to find some unit $x = u+ s
\integers \in R$ such that $ax= b$.

Eliminating $b$, we then have $a$, $c$, $d \in R$ such that
$a(1{-}cd) = 0$, and it suffices to find
some unit $x \in R$ such that $ax = ad$.

If $R = \integers_{p^m}$ where $p$ is a prime number and $m \ge 1$,
then either $a=0$ and here we can take $x=1$ as a solution, or $a
\ne 0$ and then $1{-}cd$ is a zerodivisor, hence $(1{-}cd)^m= 0$,
hence $ 1-(1{-}cd) $ is a unit, hence $cd  $ is a unit, hence $d$ is
a unit, hence $x=d$ is a solution.

  By the Chinese remainder
theorem,
  $R$ is a direct product of  a finite number of rings of the form
$\integers_{p^m}$ where $p$ is a prime number and $m \ge 1$.
By the preceding paragraph, we can find a suitable unit in each of
these factors, and then form a suitable
unit in $R$.  This completes the proof.
\end{proof}

The following is also well known.

\begin{lemma}\label{lem:first} Let  $r \in [2{\uparrow} \infty[\,$
and  \mbox{$m,\,m' \in
[1{\uparrow}\infty[\,$}.  If $m' \equiv m  \,\,\normalfont{\mod}
(r{-}1)$, then $\Mat{m'}(\,\L_r)$ and $\Mat{m}(\,\L_r)$ are
isomorphic as   partially ordered rings with involution.
\end{lemma}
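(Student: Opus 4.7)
The plan is to reduce to a single step and then construct a suitable partial isometry by hand. First, since $m'\equiv m\pmod{r-1}$, without loss of generality $m'\ge m$ and we may write $m'=m+k(r-1)$ with $k\in[0{\uparrow}\infty[\,$. Proceed by induction on $k$. The base case $k=0$ is trivial, and by transitivity of isomorphism it suffices to prove the case $k=1$, that is, to produce an isomorphism
\[
\phi\colon \Mat{m+r-1}(\,\L_r)\xrightarrow{\ \sim\ }\Mat{m}(\,\L_r)
\]
of partially ordered rings with involution, for every $m\in[1{\uparrow}\infty[\,$.

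The key construction is the $m\times(m+r-1)$ rectangular matrix
\[
Y \coloneq \mathbf{I}_{m-1}\oplus y_{[\mkern-3mu[1{\uparrow}r]\mkern-3mu]}\,\,\in\,\,\null^{m}\L_r^{\,m+r-1}.
\]
Using the defining Leavitt relations $y_{[\mkern-3mu[1{\uparrow}r]\mkern-3mu]}\cdot x_{[\mkern-3mu[1{\uparrow}r]\mkern-3mu]}^{\transp}=1$ and $x_{[\mkern-3mu[1{\uparrow}r]\mkern-3mu]}^{\transp}\cdot y_{[\mkern-3mu[1{\uparrow}r]\mkern-3mu]}=\mathbf{I}_r$, a direct block computation gives $YY^\ast=\mathbf{I}_m$ and $Y^\ast Y=\mathbf{I}_{m+r-1}$; equivalently, $Y\in\PU{}(\,\null^m\L_r^{\,m+r-1})$.

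I then define $\phi(A)\coloneq YAY^\ast$ and $\psi(B)\coloneq Y^\ast BY$. From $Y^\ast Y=\mathbf{I}_{m+r-1}$ one gets $\phi(AA')=YAY^\ast YA'Y^\ast=\phi(A)\phi(A')$, and from $YY^\ast=\mathbf{I}_m$ one gets $\phi(\mathbf{I}_{m+r-1})=\mathbf{I}_m$, so $\phi$ is a unital ring homomorphism. The same two identities, applied in the reverse order, show that $\psi$ is the two-sided inverse of $\phi$, so $\phi$ is a ring isomorphism. Compatibility with the involution is automatic from $(YAY^\ast)^\ast=YA^\ast Y^\ast$.

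The remaining point is to check that $\phi$ is an isomorphism of partially ordered abelian groups. Since each entry of $Y$ and of $Y^\ast$ lies in $\operatorname{P}(\L_r)$, and the positive cone on rectangular matrices is closed under matrix multiplication (because $\operatorname{P}(\L_r)$ is multiplicatively closed and the positive cones were defined as additive monoids generated by products of the factor cones), the map $\phi$ sends $\operatorname{P}_{m+r-1}(\,\L_r)$ into $\operatorname{P}_{m}(\,\L_r)$, and by the same argument $\psi=\phi^{-1}$ sends $\operatorname{P}_{m}(\,\L_r)$ into $\operatorname{P}_{m+r-1}(\,\L_r)$. These two inclusions together force $\phi$ to map the positive cones bijectively onto each other. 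I do not expect any serious obstacle here: the only mild care needed is the block verification of $YY^\ast=\mathbf{I}_m$ and $Y^\ast Y=\mathbf{I}_{m+r-1}$, which rests on both Leavitt relations, and the observation that the positive cone is preserved under both $\phi$ and its inverse (not just in one direction).
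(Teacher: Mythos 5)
Your proof is correct and takes essentially the same route as the paper: both exhibit a positive unitary $m\times(m+r-1)$ matrix (you use $\mathbf{I}_{m-1}\oplus y_{[\mkern-3mu[1\uparrow r]\mkern-3mu]}$, the paper uses the same blocks in the opposite order) and conjugate by it and its $\ast$-adjoint to obtain mutually inverse homomorphisms of partially ordered rings with involution. The only stylistic difference is that you make the reduction from general $m'\equiv m$ to the step $m'=m+r-1$ an explicit induction, whereas the paper leaves the composition implicit.
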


\begin{proof}  Let $\L \coloneq \L_r$.

Consider first the case where there exists some
$Y \in  \,\PU{}(\,\null^{m'} \L^{m})$.
We then have a map
$\Mat{m'}(\,\L)  \to  \Mat{m}( \L )$,  $\underset{m'\times
m'}{M} \mapsto
\underset{ m  \times m'}{Y^*}\,\,\underset{m'\times
m'}{M}\,\,\underset{m'\times m}{Y}$,
and it is easily seen to be a homomorphism of partially order rings
with involution.
Using $Y^\ast$
in place of $Y$, we get a map in
the reverse direction, and the two maps are mutually inverse.  Thus
it suffices to  prove that  $\PU{}(\mkern1mu\null^{m} \L^{m+r-1})$
is nonempty.
Now $y_{[\mkern-3mu[1{\uparrow}r]\mkern-3mu]} \in
\PU{}(\mkern1mu\null^{1} \L^{r})$
and $\mathbf{I}_{m-1} \in \PU{m-1}(\,\L) =
\PU{}(\mkern1mu\null^{m-1} \L^{m-1})$.  Here,
we have the diagonal sum
\begin{equation*}  \underset{1\times
r}{y_{[\mkern-3mu[1{\uparrow}r]\mkern-3mu]}}
    \oplus  \underset{(m-1)\times (m-1)}{\mathbf{I}_{m-1}}
\coloneq \begin{pmatrix}
y_{[\mkern-3mu[1{\uparrow}r]\mkern-3mu]}&0 \\
0&\mathbf{I}_{m-1}
\end{pmatrix}\,\, \in\,\,
\PU{}(\,\null^{m } \L^{m+r-1}).
\end{equation*}
This  completes the proof.
\end{proof}

\begin{AAP2}\label{final}
Let $r \in [2{\uparrow}\infty]$ and $m,\,m',\,t \in [1{\uparrow}\infty[\,$.
If $\gcd(m',r{-}1) = \gcd(m,r{-}1)$, then
$\Mat{m'} (\,\L_r^{\otimes t})$ and
$\Mat{m} (\,\L_r^{\otimes t})$
are isomorphic as partially ordered rings with involution,
and, hence,  $\PU{m'}\mkern1mu (\,\L_r^{\otimes t}) \iso
\PU{m}\mkern1mu(\,\L_r^{\otimes t})$.
\end{AAP2}

\begin{proof} For the purposes of this proof,  let us  write
$\underset{\text{porwi}}{\iso}$ to indicate ``isomorphic as partially
ordered rings with involution''.

We claim that
$\textstyle\Mat{m'}(\L_{r})\underset{\text{porwi}}{\iso}
\Mat{m}(\L_{r})$. If $r=2$, this holds   by  Lemma~\ref{lem:first};
thus we may assume that $r \ge 3$.
   By Lemma~\ref{lem:kap}, there exists  $u\in \integers$ such that
$m'u  \equiv m \,\,{\normalfont\mod} (r{-}1)$ and \mbox{$\gcd(u,r{-}1) = 1$}.
By adding some multiple of $r{-}1$ to $u$, we may further assume that $u > r$.
By Theorem~\ref{thm:aap},
$ \L_{r}\underset{\text{porwi}}{\iso}
\Mat{u}\mkern-1mu(\integers)\otimes_\integers \L_{r}$.
It is well known that $\Mat{m'}(\integers)\otimes_\integers
\Mat{u}\mkern-1mu(\integers)
\underset{\text{porwi}}{\iso} \Mat{m'u}(\integers)$.
By  Lemma~\ref{lem:first}, $\Mat{m'u}(\integers)\otimes_\integers  \L_{r}
\underset{\text{porwi}}{\iso}
\Mat{m}(\integers)\otimes_\integers  \L_{r}$.
It then follows that
$$\textstyle\Mat{m'}(\integers)\otimes_\integers
\L_{r}\underset{\text{porwi}}{\iso}
\Mat{m'}(\integers)\otimes_\integers \Mat{u}\mkern-1mu(\integers)
\otimes_\integers \L_{r} \underset{\text{porwi}}{\iso}
\Mat{m'u}(\integers)\otimes_\integers  \L_{r}
\underset{\text{porwi}}{\iso}
\Mat{m}(\integers)\otimes_\integers \L_{r},$$
and the claim is proved.

Now, for $t \ge 2$, applying $(-)\otimes_{\integers}\L_{r}^{\otimes
(t-1)}$ gives the desired result.
\end{proof}

\section{The Brin-Higman-Thompson group   $tV_{r,m}$ is
$\PU{m}(\L_r^{\otimes t})$}\label{sec:3}

We now consider the Brin-Higman-Thompson groups.  To lead into the definition
gradually, we consider first the
Higman-Thompson groups.

\begin{definitions}\label{defs:HT} Let
    $r \in [2{\uparrow}\infty[\,$ and
$m \in [1{\uparrow}\infty[\,$.
We now recall one of the constructions of  the Higman-Thompson group
$V_{r,m}$ from~\cite{Higman}; see also~\cite{Scott}.

Let $\L \coloneq \L_r$.
By Leavitt's normal-form result, the multiplicative submonoid
$\langle \langle y_{[1{\uparrow}r]}
\rangle\rangle$ of $\L$
is the free monoid on $ y_{[1{\uparrow}r]}$.
    We view the Cartesian product  $  e_{[1{\uparrow}m]\times\{1\}}\times
\langle \langle
y_{[1{\uparrow}r]} \rangle\rangle  $
as the product
    \mbox{$e_{[1{\uparrow}m]\times\{1\}}\langle\langle
y_{[1{\uparrow}r]}\rangle\rangle
\subseteq\Mat{m}(\,\L)$}.

Let $A$ be any  finite subset  of $e_{[1{\uparrow}m]\times\{1\}}
\langle \langle
y_{[1{\uparrow}r]} \rangle\rangle$.
For any $a \in A$,  the $a$th  \textit{expansion of $A$}
is
$$\partial_a(A) \coloneq (A \setminus\{a\}) \cup a y_{[1{\uparrow}r]}
\subseteq
     e_{[1{\uparrow}m]\times\{1\}}  \langle \langle y_{[1{\uparrow}r]}
\rangle\rangle.$$

Let $\mathfrak{B}_m$ denote the
smallest set of (finite) subsets of $e_{[1{\uparrow}m]\times\{1\}}
\langle \langle
y_{[1{\uparrow}r]} \rangle\rangle $
such that   $e_{[1{\uparrow}m]\times\{1\}} \in \mathfrak{B}_m$  and
$\mathfrak{B}_m$ is closed under
taking expansions, that
is,   whenever $A \in \mathfrak{B}_m$ and $a \in A$, then
$\partial_a(A) \in \mathfrak{B}_m$.
An element of   $\mathfrak{B}_m$ is called a  \textit{basis}.

For any $A \in \mathfrak{B}_m$, we can apply suitable expansions and
arrive at an element
\mbox{$B  \in \mathfrak{B}_m$} whose elements all have the same length, and
then we have \emph{all}
of the elements of  $e_{[1{\uparrow}m]\times\{1\}}\langle \langle
y_{[1{\uparrow}r]} \rangle\rangle  $
of this  length. Any such $B$  is called a \textit{homogeneous}
element  of $\mathfrak{B}_m$.

We now consider the set of maps that are bijections between elements of
$\mathfrak{B}_m$, $$\Phi \coloneq \{ \phi : A \to B,\, a \mapsto
a^\phi \mid A, B
    \in \mathfrak{B}_m, \,\, \phi \,\, \text{bijective}\}.$$
We shall construct  $V_{r,m}$ using equivalence classes in $\Phi$.

Suppose that $ A \xrightarrow{\phi} B$ is an element of $\Phi$,
and that $a \in A$, and let $b \coloneq a^\phi$.  We define
$\partial_a(\phi): \partial_a(A) \to  \partial_{b}(B)$
in the natural way, that is, $\partial_a(\phi)$ acts as $\phi$ for the
bijection \mbox{$A \setminus \{a\} \to B \setminus \{b\}$},
and sends   $ay_s$ to $by_s$ for each $s \in [1{\uparrow}r]$.
We call $\partial_a(\phi)$  the $a$th  \textit{expansion} of $\phi$.

We define the set  $V_{r,m}$ to consist of the equivalence classes in
$\Phi$ obtained by identifying
each element of $\Phi$ with all of its expansions.

We define a binary operation on $V_{r,m}$ as follows.
For any $\phi$, $\psi \in \Phi$, we can take successive
expansions of $\phi^{-1}$ and $\psi$ until they have homogeneous
domains of the same length,
   in particular until they have the same domain.
We may then compose $\phi\, \psi$.  We then obtain a well-defined
binary operation on $V_{r,m}$.
This concludes the  definition of
\textit{the Higman-Thompson group} $V_{r,m}$.

Let us mention some subgroups of $V_{r,m}$.
We give
   $e_{[1{\uparrow}m]\times\{1\}} \langle \langle
y_{[1{\uparrow}r]}\rangle\rangle$ the lexicographic ordering.
If $A$, $B \in \mathfrak{B}_m$ have the same size and $A
\xrightarrow{\phi} B$ is the unique bijective map that
respects the induced orderings, then all the expansions of $A
\xrightarrow{\phi} B$ will also respect the
induced orderings. The set of elements of $V_{r,m}$
represented by order-preserving maps form a subgroup of $V_{r,m}$,
denoted $F_{r,m}$.
Similarly, we can allow  $A \xrightarrow{\phi} B$
to be one of the maps that respects the induced orderings cyclically.
We then get the subgroup
$T_{r,m}$ of $V_{r,m}$ that contains $F_{r,m}$; see~\cite{Higman} or
\cite{Brown}.
Here, $F_{2,1}$ and $T_{2,1}$ are Thompson's group $F$ and $T$, respectively.
\end{definitions}

\begin{definitions}\label{defs:BHT}  Let $m,\,t
\in[1{\uparrow}\infty[$ and $r \in [2{\uparrow}\infty[\,$.
We now define the Brin-Higman\d1Thompson group  $tV_{r,m}$ along
the same lines as in the above definition of the Higman\d1Thompson groups.

Let $\L \coloneq \L_r$.
For $\ell\in[1{\uparrow}t]$, $k\in[1{\uparrow r}]$, we define
  $y_{\ell,k}:= 1^{\otimes (\ell-1)}\otimes  y_k \otimes 1^{\otimes
(t-\ell)}\in\L^{\otimes t}$ and $x_{\ell,k}:= y_{\ell,k}^\ast =
1^{\otimes (\ell-1)}\otimes  x_k
\otimes 1^{\otimes (t-\ell)}\in\L^{\otimes t}.$
We view the Cartesian product  $$  e_{[1{\uparrow}m]\times\{1\}}\times
\langle\langle y_{[1\uparrow t]\times[1\uparrow r]}\rangle\rangle $$
as the product
$ e_{[1{\uparrow}m]\times\{1\}}\langle\langle y_{[1\uparrow
t]\times[1\uparrow r]}\rangle\rangle
\subseteq\Mat{m}(\,\L^{\otimes t}).$

We consider $t$ different kinds of \textit{expansions} on a finite
subset  $A$ of
$e_{[1{\uparrow}m]\times\{1\}} \langle\langle y_{[1\uparrow
t]\times[1\uparrow r]}\rangle\rangle$ as follows. For
each
$\ell\in[1{\uparrow}t]$, $a \in A$, let
$$\partial_{\ell,a}(A) \coloneq  A \setminus \{a\} \cup a
y_{\{\ell\}\times [1\uparrow r]} .$$
Let $\mathfrak{B}^{(t)}_m$
be the smallest set
of subsets of
$e_{[1{\uparrow}m]\times\{1\}} \langle\langle y_{[1\uparrow
t]\times[1\uparrow r]}\rangle\rangle$
such that  $e_{[1{\uparrow}m]\times\{1\}} \in \mathfrak{B}^{(t)}_m$
and $\mathfrak{B}^{(t)}_m$ is closed under taking expansions of all
kinds. The elements of
$\mathfrak{B}^{(t)}_m$ are
called \textit{bases}.

A subset $A$ of  $ e_{[1{\uparrow}m]\times\{1\}} \langle\langle
y_{[1\uparrow t]\times[1\uparrow
r]}\rangle\rangle$  is said to be \textit{unitary} if it satisfies
    \mbox{$\sum\limits_{a\in A} (a \cdot a^\ast) = \textbf{I}_m$}, and,
for all $a,\,b \in A$, if $a \ne b$, then $a^\ast {\,\cdot\,} b = 0$
(and thus $a$ is not a prefix of~$b$).  It is not difficult to show that every
expansion of a unitary set is unitary.
Notice that the question of multiplicity does not arise since, in a
unitary set,
no element is a prefix of another. Since
$e_{[1{\uparrow}m]\times\{1\}}$ is a unitary set, we see that
every basis is unitary.

Each $b \in e_{[1{\uparrow}m]\times\{1\}} \langle\langle
y_{[1\uparrow t]\times[1\uparrow
r]}\rangle\rangle$ can be expressed uniquely as a product $b = e_{i1}
b_1 \cdots b_t$,
where each $b_\ell$ lies in $\langle\langle
y_{\{\ell\}\times[1\uparrow r]}\rangle\rangle$,
for each \mbox{$\ell \in [1{\uparrow}t]$}.
The length of $b_\ell$ is called the \textit{$\ell$-length} of $b$.

A finite subset $A$ of  $ e_{[1{\uparrow}m]\times\{1\}}
\langle\langle y_{[1\uparrow t]\times[1\uparrow
r]}\rangle\rangle$  is
\textit{multi-homogeneous} if, for each \mbox{$\ell \in
[1{\uparrow}t]$}, all the elements of $A$
have the same $\ell$-length.
Clearly, any finite subset of $ e_{[1{\uparrow}m]\times\{1\}}
\langle\langle y_{[1\uparrow t]\times[1\uparrow
r]}\rangle\rangle$  can be expanded
to a multi-homogeneous subset.  In particular, any
basis can be expanded to a multi-homogeneous basis, which will then
  have all the elements
that have the specified $\ell$-length,  for each $\ell$.
(See also \cite{KochMPNuc} Lemma 3.2.)

If $B$ is a   multi-homogeneous unitary set, then $B$ lies in a
unique multi-homogeneous basis $C$.  If $B \ne C$, then, with respect
to the partial order on
   $\Mat{m}(\L^{\otimes t})$, we would have
   \mbox{$\textbf{I}_m = \sum\limits_{b\in B} (b {\,\cdot\,} b^\ast)  <
\sum\limits_{c\in C} (c{\,\cdot\,} c^\ast) = \textbf{I}_m$},
which is a contradiction.  Thus, $B=C$.  Hence, each
multi-homogeneous unitary set is a basis.
Hence, each unitary set can be expanded to a multi-homogeneous basis.

We now consider the set of maps that are bijections between elements of
$\mathfrak{B}_m^{(t)}$, $$\Phi \coloneq \{  A \xrightarrow{\phi} B \mid A, B
    \in \mathfrak{B}_m^{(t)}, \,\, \phi \,\, \text{bijective}\}.$$
We  construct  the Brin-Higman-Thompson group  $tV_{r,m}$ as
the set of equivalence classes in $\Phi$ in the same way that we defined
   the Higman-Thompson group $V_{r,m}$  in
Definitions~\ref{defs:HT}.

For $t \ge 2$, the symbols $tF_{r,m}$ and  $tT_{r,m}$ have not been
assigned definitions;
Brin~\cite[Remark~4.9]{Brin} discusses  his unsuccessful efforts to define a
  $2F_{2,1}$  with desirable properties.
\end{definitions}

\begin{remarks}\label{rems:unitary}  If $t=2$, then every unitary set
is a basis.
To see this, suppose that $B$ is unitary.
It suffices to consider the case $m=1$.
Recall that each $b \in B$ has a factorization $b =   b_1 b_2 = b_2 b_1$
with $b_i\in\langle\langle y_{\{i\}\times[1\uparrow
r]}\rangle\rangle$ for $i=1,2$.
Consider first the case where,
  for some $b\in B$, we have $b_1 = 1$ and $b_2 \ne 1$.  Here, for
each $c \in B$, if $c \ne b$, then
$c_2{\,\cdot\,} b^\ast= 0$ and $c_2 \ne 1$.  Then $B$ is a disjoint union of
$y_{2,k} B_k$ for each $k \in [1{\uparrow}r]$.
    Each $B_k$ is unitary,
and by induction is a basis.  Hence $B$ is a basis.  In the remaining case,
for each $b\in B$, we have $b_1 \ne 1$, and then $B$ is a disjoint union of
$y_{1,k} B_k$ for each $k \in [1{\uparrow}r]$, and,  by the same
argument, $B$ is a basis.

Similarly, if $t=1$, then each unitary set is a basis.

For $t =3$, $m=1$, and $r=2$,
$
\{y_{2,1}y_{3,1},\,y_{1,1}y_{2,1}y_{3,2},\,y_{1,1}y_{2,2},\,y_{1,2}y_{2,2}y_{3,1},\,y_{1,2}y_{3,2}\}$
is a unitary set that is not a basis.
\end{remarks}

We now come to our main result.
In \cite{Pardo}, Pardo found this result for Higman-Thompson groups,
i.e., in the case $t=1$.

\begin{theorem} \label{thm:Hig} Let
    $r \in [2{\uparrow}\infty[\,$ and
$m,\,t \in [1{\uparrow}\infty[\,$.
Then   $ \PU{m}(\,\L_r^{\otimes t})$ is   isomorphic
to the Brin-Higman-Thompson group  $tV_{r,m}$.
\end{theorem}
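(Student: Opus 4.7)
The plan is to define a map
\[
\Theta \colon tV_{r,m}\to \PU{m}(\,\L_r^{\otimes t}),\qquad [\phi]\mapsto Y_\phi\coloneq \sum_{a\in A}a\cdot \phi(a)^\ast,
\]
where $\phi\colon A\to B$ is any representative of a class in $tV_{r,m}$, and then to show that $\Theta$ is an isomorphism of groups. Writing $\L\coloneq \L_r$ and $\mathcal{M}\coloneq e_{[1{\uparrow}m]\times\{1\}}\langle\langle y_{[1{\uparrow}t]\times[1{\uparrow}r]}\rangle\rangle$, the argument for well-definedness rests on two identities valid for any unitary set $A\subseteq \mathcal{M}$: first, $a^\ast a'=\delta_{a,a'}\,e_{1,1}$ for $a,a'\in A$ (the vanishing for $a\neq a'$ is the unitarity condition, while the diagonal case follows because for $a=e_{i,1}\,w$ with $w$ a monomial in the $y_{\ell,k}$'s, the relation $x_{\ell,k}y_{\ell,k}=1$ applied tensor-factor-by-tensor-factor gives $w^\ast w=1$); and second, $\sum_{a\in A}a\cdot a^\ast=\mathbf{I}_m$ by definition. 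From these, $Y_\phi$ is positive (being a sum of products in the positive cone), $Y_\phi Y_\phi^\ast=\sum_{a\in A}a\, a^\ast=\mathbf{I}_m$, and similarly $Y_\phi^\ast Y_\phi=\sum_{b\in B}b\, b^\ast=\mathbf{I}_m$, so $Y_\phi\in \PU{m}(\,\L^{\otimes t})$. Independence of the representative reduces to invariance under a single expansion $\partial_{\ell,a}(\phi)$, which is immediate from $\sum_k y_{\ell,k}x_{\ell,k}=1$ in the $\ell$-th tensor factor.

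To show $\Theta$ is a group homomorphism, I would take common expansions so that two given classes are represented by $\phi\colon A\to B$ and $\psi\colon B\to C$, with product represented by $a\mapsto\psi(\phi(a))$. The identity $\phi(a)^\ast b=\delta_{\phi(a),b}\,e_{1,1}$ for $b\in B$ yields $Y_\phi Y_\psi=\sum_a a\cdot\psi(\phi(a))^\ast=Y_{\psi\circ\phi}$. For injectivity, if $Y_\phi=Y_\psi$ with $\phi,\psi$ sharing a common multi-homogeneous domain $A$, left multiplication by $a^\ast$ gives $\phi(a)^\ast=a^\ast Y_\phi=a^\ast Y_\psi=\psi(a)^\ast$ for each $a\in A$, so $\phi=\psi$ as set maps and they represent the same class.

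The main obstacle is surjectivity. Given $Y\in \PU{m}(\,\L^{\otimes t})$, positivity lets me write $Y$ as a sum of decorated monomials $e_{p,q}\otimes\bigotimes_{\ell=1}^t w_\ell$ with each $w_\ell$ a word in $\{x_{\ell,k},y_{\ell,k}\}$; applying the relation $x_{\ell,s}y_{\ell,s'}=\delta_{s,s'}$ in each tensor factor reduces $w_\ell$ either to zero or to the form (word in $y$'s)$\,\cdot\,$(word in $x$'s), so $Y=\sum_k b_k\,a_k^\ast$ with $a_k,b_k\in \mathcal{M}$. Further expansion via $\sum_s y_{\ell,s}x_{\ell,s}=1$ lets me arrange that all the $a_k$ share a common tuple of $\ell$-lengths, and likewise for the $b_k$, so each list becomes multi-homogeneous. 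The delicate step---and the hardest part of the argument---is to deduce from the unitarity $YY^\ast=Y^\ast Y=\mathbf{I}_m$, together with the Leavitt normal-form fact that a nonempty sum of reduced monomials in $\L^{\otimes t}$ is nonzero, that after collecting duplicates the underlying set $\{a_k\}$ is a multi-homogeneous unitary subset of $\mathcal{M}$ (hence a basis by the final remark of Definitions~\ref{defs:BHT}), the same for $\{b_k\}$, and that $a_k\mapsto b_k$ is a well-defined bijection. That bijection then represents a class $[\phi]\in tV_{r,m}$ with $\Theta([\phi])=Y$.
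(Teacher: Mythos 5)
Your overall strategy is the paper's: define the candidate isomorphism $[\phi]\mapsto\sum_{a\in A}a\cdot\phi(a)^\ast$, check well-definedness via expansion-invariance, check the homomorphism property via common expansions, and prove injectivity and surjectivity. The well-definedness and homomorphism steps match the paper, and your injectivity argument (left-multiply $Y_\phi=Y_\psi$ by $a^\ast$) is correct, just a cosmetic reorganization of the paper's trivial-kernel version (right-multiply $Y_\phi=\mathbf{I}_m$ by $a^\phi$).

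The surjectivity sketch, however, has a genuine gap exactly where you flag ``the delicate step.'' Your plan is to write $Y=\sum_k b_k\, a_k^\ast$ and then insert copies of $\sum_s y_{\ell,s}x_{\ell,s}=1$ to make ``each list'' multi-homogeneous. That cannot work in general: inserting $\sum_s y_{\ell,s}x_{\ell,s}$ between $b_k$ and $a_k^\ast$ replaces $b_k a_k^\ast$ with $\sum_s (b_k y_{\ell,s})(a_k y_{\ell,s})^\ast$, which increments the $\ell$-length of $a_k$ \emph{and} of $b_k$ simultaneously. So if you insert $n_k$ times on term $k$, the new $\ell$-lengths are $|a_k|_\ell+n_k$ and $|b_k|_\ell+n_k$; forcing both sides to a constant forces $|a_k|_\ell-|b_k|_\ell$ to be independent of $k$, which has no reason to hold. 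Moreover, with both $\{a_k\}$ and $\{b_k\}$ unconstrained, you have no handle on why the $a_k$ are distinct or why the map $a_k\mapsto b_k$ is well defined.

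The paper resolves this by normalizing only one side at the outset: first make $A\coloneq\{a_k\}$ equal to a multi-homogeneous \emph{basis} (possible since insertions enlarge the $a$-side and one can pad to all $\ell$-lengths being equal, at the cost of each $p_a$ being merely a \emph{sum} of monomials), then write $u=\sum_{a\in A}a\cdot p_a^\ast$ and exploit unitarity directly via the key computation
\[
p_a^\ast\, p_{a'} \;=\; (a^\ast u)\,(u^\ast a') \;=\; a^\ast a' \;=\; e_{1,1}\,\delta_{a,a'}.
\]
The case $a=a'$ forces each $p_a$ to be a \emph{single} monomial (a sum of two or more positive monomials would give $p_a^\ast p_a>e_{1,1}$), the case $a\ne a'$ gives orthogonality, and $\sum_b b\,b^\ast=u^\ast u=\mathbf{I}_m$ then shows $B=\{p_a\}$ is a unitary set with $\phi\colon a\mapsto p_a$ a bijection. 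Only at this point is $B$ expanded to a multi-homogeneous (hence basis) $B'$, carrying $A$ along with it. If you adopt this one-sided normalization and the computation above, your surjectivity proof closes.
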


\begin{proof} We use the notation of Definitions~\ref{defs:BHT}.

For each $(A \xrightarrow{\phi} B) \in \Phi$, we define
   $(A \xrightarrow{\phi} B)^\alpha \coloneq \sum\limits_{a \in A}
(a \cdot (a^{\phi})^\ast) \in \operatorname{P}_m(\L^{\otimes t})$.
It is readily verified that $\alpha$ has the same value on
all the expansions of $(A \xrightarrow{\phi} B)$.
Also, $$\textstyle(B \xrightarrow{\phi^{-1}} A)^\alpha = \sum\limits_{a \in A}
(a^{\phi}  \cdot  a^\ast) = ((A \xrightarrow{\phi} B)^\alpha)^{\ast}.$$
Thus we have a well-defined map of sets
   $\alpha \colon t V_{r,m} \to \operatorname{P}_m(\L^{\otimes t})$.
It is a morphism of multiplicative monoids, since the identity
maps to the identity, and, for any
   $(A \xrightarrow{\phi} B)$,
   $(B \xrightarrow{\psi} C) \in \Phi$,
\begin{align*}
\textstyle (A \xrightarrow{\phi} B)^\alpha \cdot (B
\xrightarrow{\psi} C)^\alpha &=\textstyle
\sum\limits_{a\in A} (a \cdot (a^{\phi})^\ast) \cdot
\sum\limits_{b\in B} (b \cdot (b^{\psi})^\ast)
= \sum\limits_{a\in A} (a \cdot (a^{\phi})^\ast) \cdot   (a^{\phi}
\cdot ((a^{\phi})^{\psi})^\ast)
\\&\textstyle=  \sum\limits_{a\in A}    (a \cdot    (a^{\phi\, \psi})^\ast)
=  (A \xrightarrow{\phi\,\psi} C)^\alpha.
\end{align*}
In particular, $(B \xrightarrow{\phi^{-1}} A)^\alpha =
((A \xrightarrow{\phi} B)^\alpha)^{-1}$; as we have already seen that
$(B \xrightarrow{\phi^{-1}} A)^\alpha =
((A \xrightarrow{\phi} B)^\alpha)^{\ast}$, we see that
$(A \xrightarrow{\phi} B)^\alpha \in \PU{m}(\L^{\otimes t})$.
In summary, we have a well-defined homomorphism
   $\alpha \colon t V_{r,m} \to \PU{m}(\L^{\otimes t})$ which sends the
equivalence class of $(A \xrightarrow{\phi} B)$ to
$\sum\limits_{a \in A}
(a \cdot (a^{\phi})^\ast)$.

We  next prove surjectivity of $\alpha \colon tV_{r,m} \to
\PU{m}(\,\L^{\otimes t})$.

Consider an arbitrary  \mbox{$u \in \PU{m}(\,\L^{\otimes t})$}.
Since $u\in\operatorname{P}_{m}(\,\L^{\otimes t}) $, we have an
expression of $u$
as a sum of elements of the form
$ e_{i,j}{\,\cdot\,} w {\,\cdot\,}z^\ast$ with \mbox{$w$, $z \in
\langle\langle y_{[1\uparrow
t]\times[1{\uparrow}r]}\rangle\rangle$}; notice that \mbox{$ e_{i,j}
{\,\cdot\,}w {\,\cdot\,}z^\ast
   = (e_{i,1}{\,\cdot\,}w){\,\cdot\,}(e_{j,1}{\,\cdot\,}z)^\ast $}.
    By repeatedly inserting $\sum\limits_{k\in[1{\uparrow}r]}
(y_{\ell,k}{\,\cdot\,} x_{\ell,k}) \,\,(=1)$
between suitable
    $w$ and $z^\ast$, we can arrange for  all the $w$s to have the same
$\ell$-length, for each $\ell \in [1{\uparrow}t]$, and
obtain an expression
\mbox{$u = \sum\limits_{a\in A}  a  \cdot  p_{a}^\ast $},
where $A \in \mathfrak{B}_m^{(t)}$,  multi-homogeneous, and, for
each $a \in A$,  $p_a$ is a
     sum of elements from
$e_{[1{\uparrow}m]\times\{1\}}\langle\langle y_{[1\uparrow
t]\times[1{\uparrow}r]}\rangle\rangle$. It is not difficult to see
that if $p_a$ has at least two summands then
   $e_{1,1} <  (p_a^\ast) {\,\cdot\,} (p_{a})$.
   Since it is a basis,  $A$ is a unitary set.
Let \mbox{$B \coloneq \{p_a \mid a \in A\}$}, and let $  A
\xrightarrow{\phi} B$ be given by
$a \mapsto p_a$.  We shall show that $B$ is a unitary set and that
$\phi$ is injective.
   Let $a,\,a' \in A$.
Since $u$ is a unitary matrix, $u{\,\cdot\,}u^\ast = \mathbf{I}_m$, and, hence,
$$ (p_a^\ast) {\,\cdot\,} (p_{a'}) =
(a^\ast  {\,\cdot\,}  u){\,\cdot\,} (u^\ast {\,\cdot\,} a')
=  a^\ast {\,\cdot\,} \mathbf{I}_m {\,\cdot\,} a' = a^\ast {\,\cdot\,} a'
=  e_{1,1} \delta_{a,a'}.$$
In particular, $(p_a^\ast) {\,\cdot\,} (p_{a}) = e_{1,1}$,
and we see that $p_a$ has exactly one summand, that is,
\mbox{$B \subseteq    e_{[1{\uparrow}m]\times\{1\}}\langle\langle y_{[1\uparrow
t]\times[1{\uparrow}r]}\rangle\rangle$}.  If
\mbox{$a' \ne a$}, we have
$(p_a^\ast) {\,\cdot\,} (p_{a'})=0$; in particular, $\phi$ is
injective and, hence, bijective.
We also have
$$\textstyle
   \sum\limits_{b\in B}  (b {\,\cdot\,}    b^\ast)
=\sum\limits_{a\in A}  (p_a {\,\cdot\,}    p_{a}^\ast)
=
(\sum\limits_{a\in A}  p_a {\,\cdot\,}  {a}^\ast) {\,\cdot\,}
(\sum\limits_{a'\in A}  a'  {\,\cdot\,}  p_{a'}^\ast)
= u^\ast {\,\cdot\,} u
=
\mathbf{I}_m.
$$
Thus $B$ is a unitary set.  We may expand $B$ to a multi-homogeneous set $B'$,
and   $B'$ is again a unitary set, and  is then a basis; see
Definitions~\ref{defs:BHT}.
By considering the corresponding expansion of $\phi^{-1}$ we get an
expansion $A'$ of $A$, which is again a basis, and an expansion
   $ A' \xrightarrow{\phi'}  B'$ of $\phi$.
Then $(A' \xrightarrow{\phi'}  B') \in \Phi$  and
$\sum \limits_{a'\in A'} (a' {\,\cdot\,} (a'^{\phi'})^\ast) =
\sum \limits_{a \in A } (a {\,\cdot\,} (a^{\phi})^\ast) =
u$\vspace{-3mm}.  This proves that
\mbox{$\alpha \colon tV_{r,m} \to
\PU{m}(\,\L^{\otimes t})$} is surjective.

It remains to show that $\alpha \colon tV_{r,m} \to
\PU{m}(\,\L^{\otimes t})$ is injective. Suppose   that
$(A \xrightarrow{\phi} B) \in \Phi$ and $\sum\limits_{a\in A} (a
{\,\cdot\,} (a^\phi)^\ast) = \mathbf{I}_m$.
For each $a \in A$,  right multiplying the latter equation by
$a^\phi$ gives $a=  a^\phi$.
This proves that $\alpha \colon tV_{r,m} \to
\PU{m}(\,\L^{\otimes t})$ is injective.
\end{proof}

\section{Higman's proof of invariance of $r$ and
$\gcd(m,r{-}1)$}\label{sec:hig}

We use the notation of Definitions~\ref{defs:BHT}.

\begin{background}\label{back:hig}  Here we quickly review the main
points of Higman's analysis of conjugacy
   classes of finite subgroups of $tV_{r,m}$.
The arguments are easily adapted from the articles \cite{Higman},
\cite{MPNuc}, both of which are set in
a broader framework where two bases need not have a common ancestor.

For each $n \in [1{\uparrow}\infty[\,$,  there exists some $B \in
\mathfrak{B}_{m}^{(t)}$
with $\abs{B} = n$   if and only if
\mbox{$n \equiv m \text{ mod } (r{-}1)$}.

   By working with minimal common expansions, one can show that
each finite subgroup $H$ of $t V_{r,m}$ permutes the elements of some
$B \in \mathfrak{B}_{m}^{(t)}$; moreover,
the conjugacy class of $H$ in $t V_{r,m}$ is then determined by the
decomposition of $B$ into
$H$-orbits modulo identifying expansions of entire $H$-orbits. Here we will be
counting the number of isomorphic copies of an orbit
modulo $r{-}1$ except that we must distinguish between the number of
isomorphic copies of an
orbit being zero and being a nonzero multiple of $r{-}1$.

   Conversely, for any finite group $H$,
any finite $H$-set of cardinal congruent to $  m \text{ mod }(r{-}1)$
can be identified with some
$B \in \mathfrak{B}_{m}^{(t)}$ and
hence give a homomorphism from $H$ to $t V_{r,m}$.
\end{background}

\begin{conclusions}\label{concs:hig}
Let us now recall Higman's   recovery of $r$ and $\gcd(m,r{-}1)$
from the isomorphism class of $tV_{r,m}$.

Let $p$ be a prime number, let $a \in [1{\uparrow}\infty[\,$, and
let $\operatorname{cc}(p^a, tV_{r,m})$ denote the
number of conjugacy classes of cyclic subgroups of $tV_{r,m}$ whose
order divides $p^{\,a}$.
Then $\operatorname{cc}(p^{\,a}, tV_{r,m})$ is an invariant of the
isomorphism class of $tV_{r,m}$.
It follows from Background~\ref{back:hig}   that
\begin{align}
&\operatorname{cc}(p^{\,a}, tV_{r,m}) \text{ is equal to the number
of sequences }
    n_{[\mkern-3mu[0{\uparrow}a]\mkern-3mu]}  \in
   [0{\uparrow}(r{-}1)] ^{a+1} \label{eq:hig}\\
&\text{such that}
\textstyle \sum\limits_{j=0}^a(n_jp^{\,j})\equiv m \text{ mod }(r{-}1)
\text{, and }  n_i \ne 0  \text { for some } i \in [0{\uparrow}a]. \nonumber
\end{align}

(i). Let  $p^{\,a}$ be the highest power of $p$ dividing $r{-}1$.
Let \mbox{$p^{\,b}$} denote the highest power of $p$ dividing $
\gcd(m, r{-}1)$.
We shall show that $\operatorname{cc}(p^{\,a}, tV_{r,m})  =
\textstyle \sum\limits_{i=0}^b (p^i r^{a-i})$.

By rewriting \eqref{eq:hig} ignoring  leading zeros in
$n_{[\mkern-3mu[0{\uparrow}a]\mkern-3mu]}$, we see that
$$\operatorname{cc}(p^{\,a}, tV_{r,m})
=\textstyle \sum\limits_{i=0}^a
\vert \{ n_{[\mkern-3mu[i{\uparrow}a]\mkern-3mu]} \in
([0{\uparrow}(
r-1
)])^{a+1-i} :
\sum\limits_{j=i}^a(n_jp^{\,j})\equiv m \text{ mod }(r{-}1)
\text{, and }  n_i \ne 0   \} \vert.$$
If  $b < a$, then   $x p^{\,b+1} \equiv m \text{ mod }(r{-}1)$ has no
solutions, and now, since $b \le a$, we see
$$\operatorname{cc}(p^{\,a}, tV_{r,m}) =
\textstyle \sum\limits_{i=0}^b
\vert \{ n_{[\mkern-3mu[i{\uparrow}a]\mkern-3mu]} \in
([0{\uparrow}(r {-}1)])^{a+1-i} :
\sum\limits_{j=i}^a(n_jp^{\,j})\equiv m \text{ mod }(r{-}1)
\text{ and }  n_i \ne 0   \} \vert.$$
Here, the solutions of $n_ip^{\,i}\equiv m
-\sum\limits_{j=i+1}^a(n_jp^{\,j})\text{ mod }(r{-}1)$, $n_i \ne 0$,
   are given by all possible
$r^{a-i}$ choices for $n_{[\mkern-3mu[(i+1){\uparrow}a]\mkern-3mu]} \in
([0{\uparrow}(r{-}1)])^{a-i}$, and then   $p^{\,i}$ choices for
$n_i$ in the set
$[1{\uparrow}(r{-}1)]$ of representatives of $\integers_{r-1}$.  Hence
$\operatorname{cc}(p^{\,a}, tV_{r,m})  = \textstyle
\sum\limits_{i=0}^b (p^{\,i} r^{a-i})$, as claimed.

(ii). Now suppose that $p$ does not divide  $r{-}1$.

By arguing as in (i), we can show that   $\operatorname{cc}(p^{\,a},
tV_{r,m}) \overset{\eqref{eq:hig}}{=} \sum\limits_{i=0}^a   r^{a-i} $.
The case $a=1$ shows that
for all but finitely many primes $p$, there are exactly $r$ conjugacy
classes of subgroups
of order  exactly $p$ in~$tV_{r,m}$.
It now follows  that $r$ is an invariant  of the isomorphism class of
$tV_{r,m}$.

   It then follows from (i) that   $\gcd(m,r{-}1)$ is also an
invariant  of the isomorphism class
of~$tV_{r,m}$.
\end{conclusions}

\section{The Bleak-Brin-Lanoue proof of invariance of $t$}\label{sec:BBL}

We use the notation of Definitions~\ref{defs:BHT}.

In~\cite{BL}, Bleak-Lanoue developed arguments of
Brin~\cite{Brin},~\cite{Brin0} to
   prove that if \mbox{$t'V_{2,1} \iso tV_{2,1}$} then $t'=t$.
In this section we shall give a straightforward adaptation of
their arguments to our language and show  that if \mbox{$t'V_{r',m'}
\iso tV_{r,m}$}, then $t'=t$.
Here,  $tV_{r,m}$  will be viewed as a group of self-homeomorphisms
of a Cantor set~$\mathcal{E}_{r,m}^{(t)}$;
since the elements of  $\mathcal{E}_{r,m}^{(t)}$ involve one-sided
infinite words,
we follow the standard practice of using \textit{left} actions on
\textit{right}-infinite words.

\begin{definitions} Let $X$ be a topological space and let $G$ be a
group of self-homeomorphisms of $X$
acting on the left, $g:x\mapsto g  {\,\cdot\,} x$.

Let $x \in X$.  We let  $\mathcal{N}(x) $ denote the set of all open
neighbourhoods of $x$ in $X$,
a downward directed system.  We write $\Fix(x;   G ) \coloneq \{g \in
G \mid g {\,\cdot\,} x = x \} \le G$.
For each subset $U$ of $X$, we write $\Fix(U;   G ) \coloneq
\bigcap\limits_{u \in U}
\Fix(u;   G )  \le G$.  We write
   $$\Fix^\circ(x; G) \coloneq \textstyle \bigcup\limits_{U \in
\,\mathcal{N}(x)}
\mkern-10mu\Fix(U;G)\,\, \unlhd \,\,\Fix(x; G)
\text{\,\,\,\,\,\,\,and\,\,\,\,\,}
\Germs(x ;G) \coloneq \Fix(x;G)/\Fix^\circ(x;G),$$
called the \textit{groups of germs of $G$ which fix $x$}.

We say that $G$ is \textit{locally dense} if, for each nonempty, open
subset $U$ of $X$ and each $u \in U$,
the closure of the orbit $\Fix(X{\setminus}U;G) {\,\cdot\,} u$
contains some nonempty, open subset of $U$.
\end{definitions}

To recall   Rubin's theorem,
   we \textit{copy} the following paragraph from~\cite{Brin} and add
to  Rubin's theorem a
phrase from~\cite{BL}  about germs.

\begin{background}[\normalfont\cite{Brin}, \cite{BL}]
\label{back:rub} The following is essentially
Theorem 3.1 of~\cite{Rubin2}
   where it is described as a combination
of parts (a), (b) and (c) of Theorem 3.5 of~\cite{Rubin1}. The
hypothesis that there
be no isolated points was inadvertently omitted from~\cite{Rubin2}
where it is needed. The
terminology \textit{locally dense} is not used in
either~\cite{Rubin2} or ~\cite{Rubin1}. However,
in the absence of isolated points, it implies the notion of
\textit{locally moving} that is
used in~\cite{Rubin2}. The absence of isolated points seems to
correspond to the assumption of ``no atoms"
   in the Boolean algebras
of~\cite{Rubin1}.

\medskip

\noindent\textbf{\ref{back:rub}.1~Rubin's theorem} \cite{Rubin2}.
\textit{Let $G$, resp. $H$, be a locally dense
group of self-homeo\-morphisms of a locally compact, Hausdorff topological
space without isolated points   $X$, resp.~$Y$.   For each isomorphism
   $\phi \colon G \to H$,  there exists a unique homeomorphism
$\tau \colon X \to Y$ with the property that, for each $g\in G$,
$\phi(g)  = \tau g \tau^{-1}$,
and then, for each $x \in X$, $\Germs(x;G) \iso \Germs(\tau(x);H)$.}  \qed
\end{background}

\begin{remarks}\label{rems:tinv}  We shall recall below that
$tV_{r,m}$ can be viewed as a
locally dense group of self-homeo\-morphisms of a Cantor set
$\mathcal{E}_{r,m}^{(t)}$
which is a compact, Hausdorff  topological space  without isolated points.
We shall show that the set of isomorphism classes of groups given by
   $\{ \Germs(\nu ;tV_{r,m}) : \nu \in \mathcal{E}_{r,m}^{(t)}\}$  equals
   the set of isomorphism classes of groups given by
$\{\integers^n : n \in [0{\uparrow}t]\}$.
It will then follow from Rubin's theorem that if
$ t' V_{r',m'} \iso tV_{r,m} $, then $t' = t$.

In~\cite{Brin}, Brin showed that $2V_{2,1} \not \iso V_{r,m}$ by using
Rubin's theorem and a delicate analysis of dynamics and orbit sizes.
In an earlier article~\cite{Brin0}, Brin had considered  germs to study
Thompson's groups~$F$ and $T$.
Bleak-Lanoue~\cite{BL}  combined these two approaches.
They found
the set of isomorphism classes of groups given by
   $\{ \Germs(\nu ;tV_{2,1}) : \nu \in \mathcal{E}_{2,1}^{(t)}\}$
and deduced that if $tV_{2,1} \iso t' V_{2,1}$, then $t' = t$.
Our proof closely follows  theirs.
\end{remarks}

\begin{definitions}\label{defs:geom}

For each $\ell \in [1{\uparrow}t]$, let   $\mathcal{E}_\ell$ denote the set of
right-infinite words in $ y_{\{\ell\} \times  [1{\uparrow}r]}$.
We view $\mathcal{E}_\ell$ as a metric space with
$d(\beta,\gamma)\coloneq (1+|\text{ largest common prefix of
}\beta,\,\gamma\,|\,)^{-1}$.  We view
\mbox{$e_{[1{\uparrow}m]\times\{1\}}$} as a discrete space.
Let
$$\mathcal{E}^{(t)}_{r,m} \coloneq e_{[1{\uparrow}m]\times\{1\}}
\times \mathcal{E}_1
\times \cdots \times \mathcal{E}_t,$$
   and let $\mathcal{E}^{(t)}_{r,m}$ have the product topology.
Then   $  \mathcal{E}^{(t)}_{r,m} $ is a compact, Hausdorff space
without isolated points.

We write each $\nu = (e_{i,1},  \beta_1, \ldots,\beta_t) \in
\mathcal{E}^{(t)}_{r,m}$ as a formal
product  $\nu = e_{i,1}   \beta_1   \cdots   \beta_t$, thought of  as
a limit of  elements of $e_{ i,1 }
\langle \langle y_{[1{\uparrow}t] \times [1{\uparrow}r]} \rangle \rangle$
which have long factors in each $\langle \langle y_{\{\ell\} \times
[1{\uparrow}r]} \rangle \rangle$.
With this formal-product viewpoint, we can define the set of elements of $
e_{[1{\uparrow}m]\times\{1\}} \langle \langle y_{[1{\uparrow}t]
\times [1{\uparrow}r]} \rangle \rangle$
that are \textit{prefixes} of $\nu$.
Let
   $  b \in e_{[1{\uparrow}m]\times\{1\}} \langle \langle
y_{[1{\uparrow}t] \times [1{\uparrow}r]}
\rangle \rangle$.  We define the \textit{shadow} of $b$,
denoted $(b{\blacktriangleleft})$, to be the set of all elements of $
\mathcal{E}^{(t)}_{r,m}$ that have
   $b$ as a prefix.
If $B$ is any basis, then $ \mathcal{E}^{(t)}_{r,m}$ is the disjoint
union of the shadows of the elements of $B$.
   Then $(b{\blacktriangleleft})$ is a closed and open subset of
$ \mathcal{E}^{(t)}_{r,m}$, and the set of all
shadows forms a basis for the open topology on $ \mathcal{E}^{(t)}_{r,m}$.

Let $\integers[\mathcal{E}^{(t)}_{r,m}]$ denote the free abelian group on
$\mathcal{E}^{(t)}_{r,m}$, with the elements of
$\integers[\mathcal{E}^{(t)}_{r,m}]$
   expressed as formal sums $\sum\limits_{\nu \in
\mathcal{E}^{(t)}_{r,m}} n_\nu {\,\cdot\,} \nu$,
with $n_\nu = 0$  for all but finitely many $\nu \in  \mathcal{E}^{(t)}_{r,m}$.
We think of the elements of
$\integers[\mathcal{E}^{(t)}_{r,m}]$ as matrices that can be
approximated arbitrarily closely by
elements of  $\Mat{m}(\L^{\otimes t})\,e_{1,1}$.
In this way,     $\integers[\mathcal{E}^{(t)}_{r,m}]$ has the structure of a
topological left $\Mat{m}(\,\L^{\otimes t})$-module.
We shall see that   $\PU{m}(\,\L^{\otimes t})$  acts on the
$\integers$-basis $\mathcal{E}^{(t)}_{r,m}$.

Let  $\nu \in  \mathcal{E}^{(t)}_{r,m}$ and let
   $a,\, b \in e_{[1{\uparrow}m]\times\{1\}} \langle \langle
y_{[1{\uparrow}t] \times [1{\uparrow}r]}
\rangle \rangle$.

If $b$ is not a prefix of $\nu$,  then $b^\ast {\,\cdot\,} \nu = 0$.

If $\nu \in (b{\blacktriangleleft})$,  we have
$b^\ast {\,\cdot\,} \nu \in \mathcal{E}^{(t)}_{r,m}$,  with  first
factor   $e_{1,1}$,
and  we have $a {\,\cdot\,} b^\ast{\,\cdot\,} \nu  \in
\mathcal{E}^{(t)}_{r,m}$.
   The element
$a {\,\cdot\,} b^\ast \in \Mat{m}(\,\L^{\otimes t})$ uniquely
determines \mbox{$a,\,b \in
e_{[1{\uparrow}m]\times\{1\}}
\langle \langle y_{[1{\uparrow}t] \times [1{\uparrow}r]} \rangle
\rangle$}.  We shall be
viewing $a {\,\cdot\,\,} b^\ast$ as a homeomorphism
$(b{\blacktriangleleft}) \to (a{\blacktriangleleft})$, $\mu \mapsto
a {\,\cdot\,} b^\ast{\,\cdot\,} \mu$,
that  replaces the prefix $b$ with the prefix~$a$.
   This homeomorphism is an identity map if and only if $a=b$. As
homeomorphisms,
$a {\,\cdot\,} b^\ast$ and   $b {\,\cdot\,} a ^\ast$ are mutually inverse.

Let $u \in \PU{m}(\L^{\otimes t})$.  Then there exist bases $A,\, B
\in  \mathfrak{B}_{m}^{(t)}$
and a bijective map $ A \xrightarrow{\phi} B$ such that
\mbox{$u = \sum\limits_{b \in B} (b^{\,\phi^{-1}} \hskip-3.2pt
{\cdot\,} b^\ast)$}.
   Recall that
   the set of all such $(A \xrightarrow{\phi} B)$ forms a single
equivalence class for the
smallest equivalence
relation which identifies expansions.
We view $u$ as being this equivalence class, and we write \mbox{$(A
\xrightarrow{\phi} B)  \in u$}.
Let $\nu \in \mathcal{E}_{r,m}^{(t)}$.
Then there is a unique element $b_0$ of $B$ which is a prefix of $\nu$,
and  $u {\,\cdot\,} \nu  =   b_0^{\,\phi^{-1}} \cdot \,b_0^\ast \cdot
\nu \in \mathcal{E}_{r,m}^{(t)}$.
Left multiplication by $u$ then
gives a self-homeomorphism of $\mathcal{E}_{r,m}^{(t)}$ which acts as
$ b^{\,\phi^{-1}} \hskip-3.2pt {\cdot\,} b^\ast $ on   $
(b{\blacktriangleleft})$, for each $b \in B$.
   The action of $u$ on  $\mathcal{E}_{r,m}^{(t)}$
is trivial only if $u=1$.
Thus  $tV_{r,m}$, identified with $\PU{m}(\L^{\otimes t})$,
   is a  group of self-homeomorphisms of  $\mathcal{E}_{r,m}^{(t)}$.
\end{definitions}

\begin{lemma}[\normalfont Brin~\cite{Brin}]  The group
$tV_{r,m}$  of self-homeomorphisms of  $\mathcal{E}_{r,m}^{(t)}$ is
locally dense.
\end{lemma}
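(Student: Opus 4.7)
The plan is to work shadow-by-shadow and produce, explicitly, elements of $tV_{r,m}$ that fix the complement of $U$ pointwise while carrying $u$ into any prescribed sub-shadow. Since the shadows $(b{\blacktriangleleft})$ form a basis of the topology on $\mathcal{E}^{(t)}_{r,m}$, I may shrink $U$ and assume $U = (b{\blacktriangleleft})$ for some $b \in e_{[1{\uparrow}m]\times\{1\}}\langle\langle y_{[1{\uparrow}t]\times[1{\uparrow}r]}\rangle\rangle$ containing $u$. In fact I shall prove the stronger assertion that the orbit $\Fix(\mathcal{E}^{(t)}_{r,m}\setminus(b{\blacktriangleleft});tV_{r,m})\cdot u$ is dense in all of $(b{\blacktriangleleft})$, so its closure contains the nonempty open set $(b{\blacktriangleleft})\subseteq U$.

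To verify density, let $\nu\in(b{\blacktriangleleft})$ and let $W$ be any open neighborhood of $\nu$; replacing $W$ by a smaller basic open set, I may assume $W=(d{\blacktriangleleft})\subseteq(b{\blacktriangleleft})$, so that $d$ admits $b$ as a prefix. Since $u\in(b{\blacktriangleleft})$, some prefix of $u$ has the form $b\cdot e$. By expanding $d$ further inside $W$ multi-homogeneously and by lengthening $e$, I arrange that $b\cdot e$ and $d$ have the \emph{same} $\ell$-length for each $\ell\in[1{\uparrow}t]$. Let $B_1$ denote the set of all elements of $e_{[1{\uparrow}m]\times\{1\}}\langle\langle y_{[1{\uparrow}t]\times[1{\uparrow}r]}\rangle\rangle$ with these specified $\ell$-lengths that have $b$ as a prefix; then $B_1$ is a multi-homogeneous expansion of $\{b\}$ containing both $b\cdot e$ and $d$.

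Starting from the initial basis $e_{[1{\uparrow}m]\times\{1\}}$, a finite sequence of expansions following the letters of $b$ produces a basis of the form $\{b\}\cup B_0\in\mathfrak{B}^{(t)}_m$, and then expanding $b$ further to $B_1$ yields the basis $B_1\cup B_0\in\mathfrak{B}^{(t)}_m$. Define $\phi\colon B_1\cup B_0\to B_1\cup B_0$ to fix $B_0$ pointwise, to send $b\cdot e\mapsto d$, and to pair $B_1\setminus\{b\cdot e\}$ with $B_1\setminus\{d\}$ by any bijection (if $b\cdot e=d$, take the identity on $B_1$). Under the isomorphism of Theorem~\ref{thm:Hig}, the equivalence class of $\phi$ is an element $g\in tV_{r,m}$ which acts as the identity on each shadow $(b'{\blacktriangleleft})$, $b'\in B_0$, hence fixes $\mathcal{E}^{(t)}_{r,m}\setminus(b{\blacktriangleleft})$ pointwise, while it restricts to the prefix-swap homeomorphism $(b\cdot e{\blacktriangleleft})\to(d{\blacktriangleleft})$ on its own block; in particular $g\cdot u\in(d{\blacktriangleleft})\subseteq W$, as required.

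The only step requiring any thought is forcing $b\cdot e$ and $d$ into a common multi-homogeneous expansion of $\{b\}$; this is what motivates the joint equalisation of $\ell$-lengths in the second paragraph. Everything else reduces to the description of $tV_{r,m}$ as equivalence classes of bijections between bases in $\mathfrak{B}^{(t)}_m$.
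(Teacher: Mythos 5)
Your proof is correct in substance and follows essentially the same line as the paper's: reduce to the basic open set $(b{\blacktriangleleft})$, then produce an element of $\Fix(\mathcal{E}^{(t)}_{r,m}\setminus(b{\blacktriangleleft});tV_{r,m})$ moving a prefix of the source point into the target shadow, by swapping two elements of a basis that descend from $b$. The only packaging difference is that you route through the multi-homogeneous expansion $B_1$ of $\{b\}$ containing both $b\cdot e$ and $d$, whereas the paper simply expands an arbitrary basis $B\ni b$ towards the target prefix $b'$ and then observes that the unique element of the resulting basis which is a prefix of the source point automatically lies in the expansion of $b$. Your detour through multi-homogeneity is harmless but unnecessary.

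One small slip worth flagging: under the paper's convention, a bijection $A\xrightarrow{\phi}B$ corresponds to the element $\sum_{a\in A}a\cdot(a^\phi)^\ast$, and this acts on $(b_0{\blacktriangleleft})$ (for $b_0\in B$) as $b_0^{\,\phi^{-1}}\!\cdot b_0^\ast$, carrying $(b_0{\blacktriangleleft})$ to $(b_0^{\,\phi^{-1}}{\blacktriangleleft})$. Thus if your $\phi$ sends $b\cdot e\mapsto d$, the resulting $g$ carries $(d{\blacktriangleleft})$ to $(b\cdot e{\blacktriangleleft})$, not the other way around. To get $g\cdot u\in(d{\blacktriangleleft})$ you should instead define $\phi$ to send $d\mapsto b\cdot e$ (equivalently, use $g^{-1}$); note that this is exactly what the paper does, sending the target prefix $b'$ to the source prefix $a$. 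This is a one-line fix and does not affect the validity of the argument.
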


\begin{proof}  Consider an open subset of $\mathcal{E}_{r,m}^{(t)}$
and then choose a smaller open subset
of the form~$(b{\blacktriangleleft})$.  Let
$H \coloneq  \Fix( \mathcal{E}_{r,m}^{(t)} \setminus
(b{\blacktriangleleft}); tV_{r,m})$.
Then   $H$ acts on $(b{\blacktriangleleft})$.  Consider any $\nu \in
(b{\blacktriangleleft})$.
We shall show that the closure of the orbit
$H {\,\cdot\,} \nu$ is all of $(b{\blacktriangleleft})$, which will
show that $tV_{r,m}$ is locally dense.

Choose any $\nu\mkern2mu' \in (b{\blacktriangleleft})$.  We want to
approximate $\nu\mkern2mu'$ arbitrarily closely
by various $u {\,\cdot\,} \nu$ with $u \in H$.  Choose an open
neighbourhood of $\nu\mkern2mu'$  in
$(b{\blacktriangleleft})$, and
then choose a smaller open neighbourhood of the form
$(b'{\blacktriangleleft})$.
It suffices to find $u \in H$ such that  \mbox{$u {\,\cdot\,} \nu \in
(b\mkern2mu'{\blacktriangleleft})$}.

Now $b$ is a prefix of $b\mkern2mu'$ which is a prefix of $\nu\mkern2mu'$.
Let $B$ be a basis containing $b$, and expand $B$ to a basis $B'$ by
expanding $b$
towards $b\mkern2mu'$, that is,  $B\setminus\{b\} \subseteq B'$ and
$b\mkern2mu' \in B'$.  There exists a
unique $a \in B'$ such that $a$ is a prefix of $\nu$, and then $b$ is
a prefix of $a$.
Choose a bijective map
$  B' \xrightarrow{\phi} B'$ that fixes $B\setminus \{b\}$ and sends
$b\mkern2mu'$ to $a$.
Then  $(B' \xrightarrow{\phi} B')$ lies in a unique $u \in tV_{m,r}$.
Now $u \in H$,
   and $u$ carries $(a{\blacktriangleleft})$ to
   $(b\mkern2mu'{\blacktriangleleft})$.  In particular, $u {\,\cdot\,}
\nu \in (b\mkern2mu'{\blacktriangleleft})$,
as desired.
\end{proof}

\begin{conclusions}\label{concs:BBL} Let $\nu = (e_{i,1},
\beta_1,\ldots,\beta_t)
\in \mathcal{E}_{r,m}^{(t)}$.  We  want to analyse $\Germs(\nu, tV_{r,m})$.

Consider any $u \in  \Fix(\nu; tV_{r,m})$, and consider any $(A
\xrightarrow{\phi} B) \in u$.
There exist  a unique $a \in A$ and a unique $b \in B$ such that $a$
and $b$ are prefixes of
$\nu$.  Since
$u \in  \Fix(\nu; tV_{r,m})$, we have $b^{\,\phi^{-1}} \hskip-3.2pt
{\,\cdot\,} b^\ast
   {\,\cdot\,} \nu = \nu$, or, equivalently,
   $ (b^{\,\phi^{-1}})^\ast \hskip-3.2pt {\,\cdot\,} \nu = b^\ast
{\,\cdot\,} \nu$, or, equivalently,
we have two factorizations $\nu = b^{\phi^{-1}} \hskip-3.2pt
{\,\cdot\,} \nu\,' = b{\,\cdot\,} \nu\,'$
with the same \textit{tail} $\nu\,'$.   Thus $a^\phi = b$.  Notice that
$u$ acts as $a {\,\cdot\,} b^\ast$ on   $ (b{\blacktriangleleft})$.  Moreover,
any element of the coset $u {\,\cdot\,}
\Fix((b{\blacktriangleleft});tV_{r,m})$ will also act
as   $a {\,\cdot\,} b^\ast$ on   $ (b{\blacktriangleleft})$.
If in place of $(A \xrightarrow{\phi} B)$ we choose an expansion of
$(A \xrightarrow{\phi} B) $ in $u$, then in place of  $a {\cdot} b^\ast$
   we get an element of the form   $ a {\,\cdot\,}  c {\,\cdot\,}
c^\ast  {\,\cdot\,}  b^\ast$, where
$c$ is a prefix of $\nu\,'$; we then say that
$ a {\,\cdot\,}  c {\,\cdot\,} c^\ast  {\,\cdot\,}  b^\ast$ is an
\textit{expansion of $a {\cdot} b^\ast$
towards $\nu$}.  Here all the elements of the coset
$u {\,\cdot\,} \Fix(((b{\,\cdot\,}c){\blacktriangleleft});tV_{r,m})$   act
as  $ a {\,\cdot\,}  c {\,\cdot\,} c^\ast  {\,\cdot\,}  b^\ast$ on
    $( (b{\,\cdot\,}c){\blacktriangleleft})$.  Thus longer and longer
expansions of $a {\cdot\,\,} b^\ast$
towards $\nu$
determine larger and larger cosets within the germ of $u$.

This leads us to consider the set $\{a {\,\cdot\,} b^\ast \mid a, b \in
e_{ i,1} \langle \langle y_{[1{\uparrow}t] \times [1{\uparrow}r]}
\rangle \rangle,
\,a {\,\cdot\,} b^\ast   {\,\cdot\,} \nu =  \nu\}$ modulo the
smallest equivalence relation that identifies
expansions towards $\nu$.
This set of equivalence classes is a group, denoted $\rep(\nu)$,
with the multiplication that is induced from the multiplication of
compatible representatives,
$(a  {\,\cdot\,} b^\ast) {\,\cdot\,} (b  {\,\cdot\,} c^\ast) = a
{\,\cdot\,}c^\ast$.

It follows from the foregoing that  we have an injective
   homomorphism $$\Germs(\nu; tV_{r,m}) \to \rep(\nu).$$
To see that  this homomorphism is also surjective,  notice that it is
a straightforward matter to construct an element of
$tV_{r,m}$ which carries one given prefix of $\nu$ to another as follows.
We choose one basis containing each,
   and if the bases are not the same size, the smaller basis
can be expanded without removing the specified prefix until the bases
are the same size.  Then we  choose any bijection between the bases
that carries
the first chosen prefix  of $\nu$ to the second chosen prefix of $\nu$.

The next step is to compute $\rep(\nu)$.

Let $\ell \in [1{\uparrow}t]$.
We say that $\beta_\ell$ is \textit{rational} if there exist $w_\ell$,
$z_\ell \in  \langle \langle y_{\{\ell\} \times [1{\uparrow}r]}
\rangle \rangle$ with
$z_\ell \ne 1$ such that $\beta_\ell = w_\ell {\cdot}z_\ell^\infty$;
otherwise, $\beta_\ell$ is \textit{irrational}.

For the purpose of exposition, we may assume that there exists some
$n \in [0{\uparrow}t]$ such that
$\beta_\ell$ is rational if $\ell \in [1{\uparrow}n]$ and
$\beta_\ell$ is irrational if $\ell \in [(n{+}1){\uparrow}t]$.
Thus we may write $$\nu =  (e_{i,1}, w_1 {\cdot} z_1^\infty,  \ldots,
w_n {\cdot} z_n^\infty,  \beta_{n+1} \cdots \beta_t),$$
and we may further assume that each $z_\ell$ is not a proper power.

With $u$, $a$ and $b$ as before,
we can expand the prefixes $a$, $b$ of $\nu$ to longer prefixes and
arrange that
\begin{align*}
   &a^\ast \cdot \nu = b^\ast \cdot \nu = (e_{1,1},  z_1^{\,\infty},
\ldots,  z_n^{\,\infty},  \beta_{n+1}',
\cdots, \beta_t'\,), \text{ and then}
\\&a = e_{i,1}  (w_1 {\cdot} z_1^{q_1}) \cdots   (w_n {\cdot}
z_n^{q_n}) w_{n+1} \cdots w_t,
\\&b = e_{i,1}   (w_1 {\cdot} z_1^{q_1'}) \cdots   (w_n {\cdot}
z_n^{q_n'}) w_{n+1} \cdots w_t,
\end{align*}
where  $q_{[\mkern-3mu[1{\uparrow}n]\mkern-3mu]},\,
q_{[\mkern-3mu[1{\uparrow}n]\mkern-3mu]}'
   \in ([0{\uparrow}\infty[)^n$\vspace{1mm} and
$\beta_\ell = w_\ell {\,\cdot\,} \beta_\ell'$, $\ell \in [(n{+}1){\uparrow}t]$;
notice that irrationality implies  that  tails match up with unique
prefixes, while the fact that $z_\ell$ is not a proper
power implies that the tail $z_\ell^{\,\infty}$ matches up with a
prefix that is unique
up to right multiplication by a power of $z_\ell$.
Then
\begin{equation}\label{eq:word}
a \cdot b^\ast =
e_{i,i} (w_1 {\cdot} z_1^{q_1} {\cdot} z_1^{\ast q_1'} {\cdot} w_1^\ast) \cdots
(w_n {\cdot} z_n^{q_n} {\cdot} z_n^{\ast q_n'} {\cdot} w_n^\ast).
\end{equation}
Thus every element of $\rep(\nu)$ contains an element of the
form~\eqref{eq:word}.
Conversely, every element of the form~\eqref{eq:word} lies in some
element of $\rep(\nu)$.

   It is now straightforward to show that
   $\Germs(\nu, tV_{r,m}) \iso \integers^n$, with elements represented by
the expression~\eqref{eq:word}
corresponding to  $(q_1 \hskip-3pt{\,-\,}q_1',\,\ldots,\,
q_n\hskip-3pt {\,-\,} q_n') \in \integers^n$.

Let us now show that, for each $n \in [0{\uparrow}t]$,  there exists
some $\nu \in \mathcal{E}_{r,m}^{(t)}$ such that
\mbox{$\Germs(\nu, tV_{r,m}) \iso \integers^n$}.  Since there are
only countably many
rational right-infinite words,  there exists some
$\nu = (e_{1,1},\, y_{1,1}^{\,\infty},\, y_{2,1}^{\,\infty},\, \ldots,\,
   y_{n,1}^{\,\infty},\, \beta_{n+1},\, \cdots,\, \beta_n)\in
\mathcal{E}_{r,m}^{(t)},$
such that, for each \mbox{$\ell \in [(n{+}1){\uparrow}t]$},
$\beta_\ell$ is irrational.
By the foregoing, \mbox{$\Germs(\nu, tV_{r,m}) \iso \integers^n$}.

We have now shown that the set of isomorphism classes of groups given
by the set
   $\{ \Germs(\nu ;tV_{r,m}) : \nu \in \mathcal{E}_{r,m}^{(t)}\}$ equals
the set of isomorphism classes of groups given by the set
$\{\integers^n : n \in [0{\uparrow}t]\}$.
It now
follows from Theorem~\ref{back:rub}.1  that if
$ t' V_{r',m'}\iso tV_{r,m}$, then $t' = t$.

In fact, we can say more.  The class of groups isomorphic to
$\integers^n$ is closed
under taking subgroups of finite index.
Any (conjecturally rare) subgroup of finite index in $tV_{r,m}$ is a
locally dense group of
self-homeomorphisms of $\mathcal{E}_{r,m}^{(t)}$ and has the same
$t{+}1$ types of germs as $tV_{r,m}$.
Thus, if $ t' V_{r',m'}$ and $tV_{r,m}$ are commensurable, then $t' = t$.
\end{conclusions}

\section{Summary}\label{sec:summ}

The following builds on work of Abrams, \'Anh, Bleak, Brin, Higman,
Lanoue, Pardo, and  Thompson.

\begin{theorem}\label{thm:main} Let $r_1,\,r_2 \in
[2{\uparrow}\infty[\,$,   $m_1,\, m_2,\,t_1,\,t_2 \in [1{\uparrow}\infty[$\,.
The following are equivalent. \vspace{-2mm}
\begin{enumerate}[\normalfont (a).]
\setlength\itemsep{-.2pt}
\item $r_1 = r_2$, $\gcd(m_1,r_1{-}1)=\gcd(m_2,r_2{-}1)$, and $t_1 = t_2$.
\item $\Mat{m_1}(\L_{r_1}^{\otimes t_1})$ and
$\Mat{m_2}(\L_{r_2}^{\otimes t_2})$ are isomorphic as
partially ordered rings with involution.
\item $t_1V_{r_1,m_1} \iso t_2V_{r_2,m_2}$.
\end{enumerate}
\end{theorem}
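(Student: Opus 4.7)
The plan is to close a three-way cycle (a)$\Rightarrow$(b)$\Rightarrow$(c)$\Rightarrow$(a), invoking in each step a result already proved in the earlier sections and thereby reducing the theorem to a bookkeeping exercise.

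For (a)$\Rightarrow$(b), I would set $r\coloneq r_1 = r_2$ and $t \coloneq t_1 = t_2$, and then apply the Abrams-\'Anh-Pardo Theorem (\ref{final}) directly: since $\gcd(m_1,r{-}1)=\gcd(m_2,r{-}1)$, the rings $\Mat{m_1}(\L_{r}^{\otimes t})$ and $\Mat{m_2}(\L_{r}^{\otimes t})$ are isomorphic as partially ordered rings with involution. No further work is needed here.

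For (b)$\Rightarrow$(c), I would observe that an isomorphism of partially ordered rings with involution between $\Mat{m_1}(\L_{r_1}^{\otimes t_1})$ and $\Mat{m_2}(\L_{r_2}^{\otimes t_2})$ restricts to a group isomorphism of their subgroups of positive unitary elements, giving $\PU{m_1}(\L_{r_1}^{\otimes t_1})\iso \PU{m_2}(\L_{r_2}^{\otimes t_2})$. By Theorem~\ref{thm:Hig}, each of these positive unitary groups is isomorphic to the corresponding Brin-Higman-Thompson group, so $t_1 V_{r_1,m_1}\iso t_2 V_{r_2,m_2}$.

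For (c)$\Rightarrow$(a), I would invoke the two invariance results from Sections~\ref{sec:hig} and~\ref{sec:BBL}. Higman's conjugacy-class count $\operatorname{cc}(p^a,tV_{r,m})$ recorded in \ref{concs:hig} shows that both $r$ and $\gcd(m,r{-}1)$ are invariants of the isomorphism class of $tV_{r,m}$, so $r_1 = r_2$ and $\gcd(m_1,r_1{-}1)=\gcd(m_2,r_2{-}1)$. Then the Bleak-Brin-Lanoue analysis of germ groups recorded in \ref{concs:BBL} shows that the set of isomorphism classes $\{\operatorname{Germs}(\nu;tV_{r,m}):\nu\in\mathcal{E}_{r,m}^{(t)}\}$ equals $\{\integers^n : n\in[0{\uparrow}t]\}$, and hence, via Rubin's theorem (\ref{back:rub}.1), $t$ is recovered from the isomorphism class of $tV_{r,m}$; thus $t_1=t_2$.

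There is no real obstacle here: each implication is essentially a citation of a theorem already established in the paper. The only nontrivial point to articulate cleanly is that an isomorphism of partially ordered rings with involution automatically preserves positive unitary elements, which is immediate from the definitions since positivity, the involution, and multiplication are all preserved.
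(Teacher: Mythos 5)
Your proposal is correct and follows exactly the paper's own route: (a)$\Rightarrow$(b) by the Abrams--\'Anh--Pardo Theorem (\ref{final}), (b)$\Rightarrow$(c) by noting that a porwi-isomorphism restricts to an isomorphism of positive unitary groups and then applying Theorem~\ref{thm:Hig}, and (c)$\Rightarrow$(a) by citing the invariance results of Conclusions~\ref{concs:hig} and~\ref{concs:BBL}. The only cosmetic difference is the order in which you recover the invariants in the last step, which is immaterial.
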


\begin{proof} (a) $\Rightarrow$ (b) by Theorem~\ref{final}.

(b) $\Rightarrow$ (c).  Suppose that $\Mat{m_1}(\L_{r_1}^{\otimes t_1}) $ and $
\Mat{m_2}(\L_{r_2}^{\otimes t_2})$ are isomorphic as partially
ordered rings with involution.  Then
   $\PU{m_1} (\,\L_{r_1}^{\otimes t_1}) \iso
\PU{m_2}\mkern1mu(\,\L_{r_2}^{\otimes t_2})$.
Now by Theorem~\ref{thm:Hig},
\mbox{$t_1V_{r_1,m_1} \iso \PU{m_1} (\,\L_{r_1}^{\otimes t_1}) \iso
\PU{m_2}\mkern1mu(\,\L_{r_2}^{\otimes t_2})
\iso t_2V_{r_2,m_2}$}.

(c) $\Rightarrow$ (a). Suppose that $t_1V_{r_1,m_1} \iso t_2V_{r_2,m_2}$.
By Conclusions~\ref{concs:BBL}, $t_1=t_2$.
By Conclusions~\ref{concs:hig}, $r_1 = r_2$ and
$\gcd(m_1,r_1{-}1)=\gcd(m_2,r_2{-}1)$.
\end{proof}

\begin{remarks}\label{rems:leavitt} Let  $r \in [2{\uparrow}\infty[\,$,
$m, \,  t   \in
[1{\uparrow}\infty[\,$, and let $R \coloneq \Mat{m}(\,\L_r^{\otimes
t})$.
\newline\indent\phantom{ii}(i). Pere Ara has shown that
$r$ and $\gcd(m,r{-}1)$ are invariants of the isomorphism class of
$R$ within the class of rings;
we sketch his argument in (ii) below.

Also it follows from work of Jason Bell and George Bergman that
$t$ is an invariant of the isomorphism class of $R$ within the class
of rings; see (iii) below.

Hence the conditions in Theorem~\ref{thm:main} are further equivalent
to  \newline
(b$'$). \textit{$\Mat{m_1}(\L_{r_1}^{\otimes t_1})$ and
$\Mat{m_2}(\L_{r_2}^{\otimes t_2})$ are isomorphic as rings.}
\newline Here, (b) $\Rightarrow$ (b$'$) is clear, while (b$'$)
$\Rightarrow$ (a)
is a consequence of the foregoing results of Ara, Bell and Bergman.
Consequently, with $r$ and $t$ fixed, and $m$ varying,
the set of isomorphism classes of the
rings $\Mat{m}(\,\L_r^{\otimes t})$  is in bijective
correspondence with the set of  positive divisors of  $r{-}1$.
\newline\indent\phantom{i}(ii).  Here we record the argument  of   Ara.

Let $i \in \integers$ and let $A$ be any ring.  We shall use the
homotopy algebraic K-theory groups, $\operatorname{KH}_i(A)$,
introduced by Weibel~\cite{Weibel}.

When we  apply the Ara-Brustenga-Corti\~nas result~\cite[Theorem~8.6]{ABC}  to
  the quiver $E$  with one vertex and $r$ loops, where  $L_A(E)
\coloneq \L_r \otimes_{\integers} A$,
we obtain an exact sequenece
$$\operatorname{KH}_i(A) \xrightarrow{\text{mult. by }r-1}
\operatorname{KH}_i(A) \xrightarrow{\text{natural}}
\operatorname{KH}_i(\L_r \otimes_{\integers} A)
\to \operatorname{KH}_{i-1}(A)\xrightarrow{\text{mult. by }r-1}
\operatorname{KH}_{i-1}(A).$$

If   $A = \integers$, then $\operatorname{KH}_i(A) =0$ if $i < 0$, while
$\operatorname{KH}_0(A) \iso \integers$, with the class of $A$ in
$\operatorname{KH}_0(A)$
corresponding to $1$; see~\cite[Example~1.4]{Weibel}.
It then follows  by induction on~$t$ that if $A =\L_r^{\otimes t}$, then
$\operatorname{KH}_i(A) =0$ if  $i < 0$, while
$\operatorname{KH}_0(A) \iso \integers_{r-1}$
with the class of $A$ in $\operatorname{KH}_0(A)$
corresponding to the class of $1$ in $\integers_{r-1}$.

Recall that $R$ denotes $\Mat{m}(\L_r^{\otimes t})$. It now follows that
$\operatorname{KH}_0(R)
\iso \integers_{r-1}$ with the class of
$R$ corresponding to
the class of $m$ in $\integers_{r-1}$.
Thus $\operatorname{KH}_0(R)$ is cyclic of order $r{-}1$ and
  the class of
$R$ in $\operatorname{KH}_0(R)$ has order  $\frac{r{-}1}{\gcd(m,\, r{-}1)}$.
Hence  $r$ and $\gcd(m,r{-}1)$ are
invariants of the isomorphism class of the ring $R$, as desired.
\newline\indent(iii). Here we build on unpublished work of  Bell
and  Bergman.

Let $K$ be a commutative field, and let $\Gamma \coloneq  K
\otimes_\integers \L_r$.

Let $\Gamma^{\text{op}}$ denote the opposite ring of $\Gamma$.
Let $\Gamma^{\text{e}} \coloneq \Gamma \otimes_K \Gamma^{\text{op}}$.
    Where $K$ is understood,
the projective dimension of the left $\Gamma^{\text{e}}$-module
$\Gamma$ is denoted
$\dim \Gamma$. Bergman-Dicks~\cite[(17) and (4)]{BD} showed that
there exists an exact
sequence of left
$\Gamma^{\text{e}}$-modules $0 \to (\Gamma^{\text{e}})^{r} \to
\Gamma^{\text{e}} \to
\Gamma \to 0$.
Thus $\dim \Gamma \le 1$.

Straightforward  normal-form arguments show that the element $x_1{-}1$ of
$\Gamma$  does not have a left inverse and is not a left zerodivisor; thus
$\operatorname{w.gl.dim} \Gamma \ge 1$.

Since $\dim \Gamma \le 1$ and $\operatorname{w.gl.dim} \Gamma
\ge 1$, the Eilenberg-Rosenberg-Zelinsky result~\cite[Proposition
10(2)]{ERZ} implies that, for each $K$-algebra~$\Lambda$,
$\operatorname{l.gl.dim}(\Lambda\otimes_K \Gamma) = 1+
\operatorname{l.gl.dim}(\Lambda)$, that is,
$\operatorname{l.gl.dim}(\Lambda\otimes_\integers \L_r) = 1+
\operatorname{l.gl.dim}(\Lambda)$.

Now
$\operatorname{l.gl.dim} (K \otimes_\integers R) = \operatorname{l.gl.dim}
  (\Mat{m}(K) \otimes_\integers \L_r^{\otimes t} )  = t$, by induction on $t$.
Thus $t$ is an invariant of the isomorphism class of the ring $R$.
\end{remarks}

\bibliographystyle{plain}

\begin{thebibliography}{20}
\bibitem{AAP}
G.\ Abrams, P.\ N.\ \'Anh, and E.\ Pardo,
\newblock\textit{Isomorphisms between Leavitt algebras and their
matrix rings,\/}
\newblock{J.\ reine angew.\ Math.\ \textbf{624} (2008), 103--132.}
\vskip-.85cm \null
\bibitem{ABC}
Pere Ara, Miquel Brustenga, and Guillermo Corti\~nas,
\newblock\textit{$\operatorname{K}$-theory of Leavitt path
algebras,\/} M\"unster J.\  of Math.\
\textbf{2} (2009), 5--34.
\vskip-0.85cm \null
\bibitem{BD}
George M.\ Bergman and Warren Dicks,
\newblock\textit{Universal derivations and universal ring constructions,\/}
\newblock{Pacific J.\ Math.\  \textbf{79} (1978), 293--337.}
\vskip-0.85cm \null
\bibitem{BL}
Collin Bleak and Daniel Lanoue,
\newblock\textit{A family of non-isomorphism results,\/}
\newblock{Geom.\ Dedicata  \textbf{146} (2010),  21--26.}
\vskip-0.85cm \null
\bibitem{Brin0}
Matthew G.\ Brin,
\textit{The chameleon groups of Richards J.\ Thompson: automorphisms
and dynamics,\/}
\newblock{Inst.\ Hautes \'Etudes Sci.\ Publ.\ Math.\   \textbf{84}
(1996), 5--33.}
\vskip-0.85cm \null
\bibitem{Brin}
Matthew G.\ Brin,
\newblock\textit{Higher dimensional Thompson groups,\/}
\newblock{Geom.\ Dedicata \textbf{108} (2004), 163--192.}
\vskip-0.85cm \null
\bibitem{Brin2}
Matthew G.\ Brin,
\newblock\textit{On the baker's map and the simplicity of the higher
dimensional
Thompson groups $nV$,\/}
\newblock{Publ.\ Mat.\  \textbf{54} (2010), 433--439.}
\vskip-0.85cm \null
\bibitem{Brown}
Kenneth S.\ Brown,
\newblock\textit{Finiteness properties of groups,\/}
\newblock{J.\ Pure Appl.\ Algebra  \textbf{44}
(1987), 45--75.}
\vskip-0.85cm \null
\bibitem{ERZ}
Samuel Eilenberg, Alex Rosenberg and Daniel Zelinsky,
\newblock\textit{On the dimension of modules and algebras,
{\normalfont VIII}. Dimension of tensor
products,\/}
\newblock{Nagoya Math.\ J.\ \textbf{12} (1957), 71--93.}
\vskip-0.85cm \null
\bibitem{HM}
Johanna Hennig and Francesco Matucci,
\newblock\textit{Presentations for the higher dimensional Thompson
groups $nV$\/},
\newblock{preprint, 2010, 18 pages.}\,\, \url{http://arxiv.org/abs/1105.3714}
\vskip-0.85cm \null
\bibitem{Higman}
Graham Higman,
\newblock\textit{Finitely presented infinite simple groups,\/}
\newblock{Notes Pure Math.  \textbf{8}, Australian National
University, Canberra, 1974}.  vii+82 pp.
\vskip-0.85cm \null
\bibitem{Kaplansky}
Irving Kaplansky,  \newblock\textit{Elementary divisors and modules,\/}
\newblock{Trans.\ Amer.\ Math.\ Soc.\ \textbf{66} (1949), 464--491.}
\vskip-0.85cm \null
\bibitem{KochMPNuc}
D.\ H.\ Kochloukova, C.\ Mart\'{\i}nez-P\'erez, B.\ E.\ A.\ Nucinkis,
\newblock\textit{Cohomological finiteness properties of the
Brin-Thompson-Higman groups $2V$ and $3V$,\/}
preprint, 2010, 26 pages. \url{http://arxiv.org/abs/1009.4600}
\vskip-0.85cm \null
\bibitem{Leavitt0}
W.\ G.\ Leavitt,
\newblock\textit{Modules without invariant basis number,\/}
\newblock{Proc.\ Amer.\ Math.\ Soc.\ \textbf{8} (1957), 322--328.}
\vskip-0.85cm \null
\bibitem{Leavitt2}
W.\ G.\ Leavitt,
\newblock\textit{The module type of homomorphic images,\/}
\newblock{Duke Math.\ J.\ \textbf{32} (1965), 305--311.}
\vskip-0.85cm \null
\bibitem{MPNuc}
C.\ Mart\'{\i}nez-P\'erez, B.\ E.\ A.\ Nucinkis,
\newblock\textit{Bredon cohomological finiteness conditions for
generalisations of Thompson groups,\/}
preprint, 2011, 22 pages. \url{http://arxiv.org/abs/1105.0189}
\vskip-0.85cm \null
\bibitem{Pardo}
E.\ Pardo,
\newblock\textit{The isomorphism problem for Higman-Thompson groups,\/}
\newblock{J.\ Algebra \textbf{344} (2011), 172--183}.
\vskip-0.85cm \null
\bibitem{Rubin1}
Matatyahu Rubin,
\newblock\textit{On the reconstruction of topological spaces from
their groups of
homeomorphisms,\/}
\newblock{Trans.\ Amer.\ Math.\ Soc.\ \textbf{312} (1989),   487--538.}
\vskip-0.85cm \null
\bibitem{Rubin2}
Matatyahu Rubin,
\newblock\textit{Locally moving groups and reconstruction problems,\/}
pp. 121--157 in: Ordered groups and infinite permutation groups,
(ed. W. Charles Holland),
\newblock{Math.\ Appl.\ \textbf{354},  Kluwer Acad.\ Publ., Dordrecht, 1996.}
\vskip-0.85cm \null
\bibitem{Scott}
Elizabeth A.\  Scott,
\newblock\textit{A construction which can be used to produce finitely
presented infinite simple groups,\/}
\newblock{J.\ Algebra \textbf{90} (1984), 294--322}.
\vskip-0.85cm \null
\bibitem{Weibel}
Charles A.\ Weibel, \newblock\textit{Homotopy algebraic K-theory,\/}
pp. 461--488 in:
Algebraic K-theory and algebraic number theory (ed. Michael R. Stein
and R. Keith Dennis),
\newblock{Contemp. Math. \textbf{83}, Amer. Math. Soc., Providence, RI, 1989.}


\end{thebibliography}

\bigskip

\noindent
\textsc{Warren Dicks}    \\
\textsc{Departament de  Matem\`atiques \\
Universitat Aut\`onoma de Barcelona  \\
08193 Bellaterra (Barcelona), Spain}\\
\noindent \emph{email}{:\;\;}\url{dicks@mat.uab.cat}\\
\noindent \emph{URL}{:\;\;}\url{http://mat.uab.cat/~dicks/}

\bigskip

\noindent
\textsc{Conchita Mart\'{\i}nez-P\'erez}   \\
\textsc{Departamento de Matem\'aticas  \\
Universidad de Zaragoza  \\
50009 Zaragoza, Spain}\\
\noindent \emph{email}{:\;\;}\url{conmar@unizar.es}
\end{document}